\newcommand{\id}{{\rm Id}}
\newcommand{\tc}{{\rm TC}}
\newcommand{\cd}{{\rm cd}}
\newcommand{\cat}{{\rm cat}}
\newcommand{\secat}{{\rm secat}}
\newcommand{\Agenus}{{\mathcal{A}\mbox{-}\rm genus}}
\newcommand{\Bgenus}{{\mathcal{B}\mbox{-}\rm genus}}
\newcommand{\genus}{{\rm genus}}
\newcommand{\NN}{\mathbb{N}}
\newcommand{\ZZ}{\mathbb{Z}}
\newcommand{\Aa}{{\mathcal{A}}}
\newcommand{\quot}[2]{\left.\raisebox{.2em}{$#1$}\middle/\raisebox{-.2em}{$#2$}\right.}
\newtheorem{theorem}{Theorem}[section]
\newtheorem{proposition}[theorem]{Proposition}
\newtheorem{corollary}[theorem]{Corollary}
\theoremstyle{definition}
\newtheorem{definition}[theorem]{Definition}
\theoremstyle{remark}
\newtheorem{example}[theorem]{Example}					
\newtheorem{remark}[theorem]{Remark}
\newtheorem{note}[theorem]{Note}
\numberwithin{equation}{section}
\begin{document}
	
	\title[$\secat(H \hookrightarrow G)$ and $\tc_r(\pi)$ as $\mathcal{A}$-genus]{Sectional category of subgroup inclusions and sequential topological complexities of aspherical spaces as $\mathcal{A}$-genus}
	
	\author[A. Espinosa Baro]{Arturo Espinosa Baro}
	\address{Faculty of Mathematics and Computer Science,
		Adam Mickiewicz University, Uniwersytetu Pozna\'nskiego 4, 61-614 Pozna\'n, Poland.} 
	\email{arturo.espinosabaro@gmail.com, artesp1@amu.edu.pl}
	
	\subjclass{55M30 (68T40, 55P91)}
	\keywords{A-genus, sectional category, sequential topological complexity, classifying space of a family of subgroups}
	
	\begin{abstract}

In this paper we characterize the sectional category of subgroup inclusions and the $r^{th}$-sequential topological complexity of aspherical spaces of a group $G$ in terms of the $\Agenus$ in the sense of Clapp-Puppe and Bartsch for a suitable one-element family of $G$-spaces $\Aa$, and we discuss some of the consequences of such characterization, including new ideas about notions of category-like invariants with respect to proper actions of groups.
	
	\end{abstract}
	
	\maketitle
	
	\setcounter{tocdepth}{1}
	
	\tableofcontents
	

		\section*{\centering Introduction}
		
The \textit{topological complexity} of a topological space was introduced by M. Farber in \cite{Farber03} in order to study the degree of instability of the motion planning of mechanical systems. The notion of \emph{sequential topological complexities} in turn was developed by Y. Rudyak in \cite{RudyakHigher} as a generalization of topological complexity which models the motion planning problem for robots that are supposed to make some pre-determined intermediate stops along their ways. These homotopy invariants, as well as the closely related and well-known Lusternik-Schnirelmann category, are particular instances of the more general \emph{sectional category} of fibrations, first defined by A. Schwarz in his seminal article \cite{Schwarz66}. 
		
There is a classic theorem of S. Eilenberg and T. Ganea stating that, for a torsion-free discrete group $\pi$, the Lusternik-Schnirelmann category of a $K(\pi,1)$ corresponds with the cohomological dimension of $\pi$, see \cite{EG65}. Motivated by such strong result, M. Farber pondered in \cite{Farber06b} whether or not a characterization of $\tc(K(\pi,1))$ is possible purely in terms of algebraic information coming from the group.  Such question is not by any means a trivial one. Even though the homotopy type of a $K(\pi,1)$-space (and hence all of its homotopy invariants) is completely determined by the group $\pi$, the description of such invariants may involve some homotopy-theoretical constructions that might not be expressible in terms of classifying spaces. As of today, the question remains open, and it has become one of the most relevant on the field of topological robotics. As such, the better understanding of the topological complexity of aspherical spaces (even when not on purely algebraic terms) is a prized objective.
				
In this paper, for $G$ a torsion-free discrete group, we give a characterization of the sectional category (and consequentially also of $\tc_r$) of fibrations between spaces of type $K(\pi,1)$ induced by subgroup inclusions in terms of the $\Agenus$ of their universal cover, for a suitable family of $G$-spaces $\Aa$. These characterizations have some interesting consequences and, while still topological in nature, they allow to refine some of the bounds derived from the characterization in terms of equivariant maps between classifying spaces coming from \cite{FGLO17}, \cite{FOSequ} and \cite{BCE22}, and to find in a rather direct manner new bounds for both sectional category and sequential topological complexities of aspherical spaces. 

We will also suggest, at the end of this paper, new ideas about notions of category-like invariants with respect to proper actions of groups, formulated in the language of $\Agenus$. 

\medskip
		
		\section*{Acknowledgements}
	The author was partially supported by  National Science Center, Poland research grant UMO-2022/45/N/ST1/02814 and by a doctoral scholarship of Adam Mickiewicz University. 
	The author wish to thank Zbigniew B{\l}aszczyk for motivation on this work, and also Antonio Viruel and Lucile Vandembroucq for many useful commentaries on a preliminary version of this manuscript.

\medskip		
\section{\centering Preliminaries}
		
\subsection{Sectional category and sequential topological complexities}

The \textit{sectional category} of a map $f \colon X \to Y$, written $\secat(f)$, is defined to be the smallest integer $n \geq 0$ such that there exists an open cover $U_0$, \ldots, $U_n$ of~$Y$ and continuous maps $s_i \colon U_i \to X$ with the property that $f \circ s_i$ is homotopic to the inclusion $U_i \hookrightarrow Y$ for any $0 \leq i \leq n$. 
		
\begin{proposition}[\cite{Schwarz66}] \label{SecatProperty1}
For a fibration $p \colon E \rightarrow B$, the following holds:
\begin{enumerate}[(a)]
\item Given another fibration  $p'\colon E' \rightarrow B$ such that there exists a commutative diagram $$\begin{tikzcd}
E \ar[r,"f"] \ar[d, swap, "p"] & E' \ar[d,"p'"] \\
B \ar[r, "="] & B
\end{tikzcd}$$ we have the inequality $$ 	\secat(p') \leq \secat(p). $$
		
\item $\secat(p) \leq k$ if and only if the $(k+1)$-fold fibrewise join of $p$ $$\underbrace{p * \cdots * p}_{k+1} \colon E *_B \cdots *_B E \to B$$ has a section, where the fiberwise join is defined as $$E *_B \cdots *_B E := \{ t_0x_0 + \cdots + t_nx_n \in E* \cdots *E \mid p(x_0) = \cdots = p(x_n) \}.$$

\end{enumerate}
\end{proposition}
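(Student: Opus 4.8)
The plan is to treat the two parts independently, following Schwarz's original arguments in \cite{Schwarz66}; throughout one tacitly assumes $B$ is nice enough (e.g. paracompact) for subordinate partitions of unity to exist, as is standard in this setting.

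For part (a) I would argue directly from the definition, and this implication does not even require $p,p'$ to be fibrations. If $\secat(p)=n$, choose an open cover $U_0,\dots,U_n$ of $B$ and maps $s_i\colon U_i\to E$ with $p\circ s_i$ homotopic to the inclusion $\iota_i\colon U_i\hookrightarrow B$. Put $s_i':=f\circ s_i\colon U_i\to E'$; the commutativity $p'\circ f=p$ gives $p'\circ s_i'=p\circ s_i\simeq\iota_i$, so the same cover witnesses $\secat(p')\le n=\secat(p)$.

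For part (b), the substantive direction of each implication is a partition-of-unity construction. Assume first $\secat(p)\le k$, with cover $U_0,\dots,U_k$ and maps $s_i\colon U_i\to E$ satisfying $p\circ s_i\simeq\iota_i$. Using the homotopy lifting property of $p$ I would first rectify $s_i$ to a genuine local section $\tilde s_i\colon U_i\to E$ with $p\circ\tilde s_i=\iota_i$ (lift, starting from $s_i$, a homotopy from $p\circ s_i$ to $\iota_i$ and take its final value). Picking a partition of unity $\{\phi_i\}_{i=0}^{k}$ with $\operatorname{supp}\phi_i\subset U_i$, the formula $\sigma(b)=\sum_{i=0}^{k}\phi_i(b)\,\tilde s_i(b)$ defines a continuous map $B\to E*_B\cdots*_B E$: the $i$-th coordinate is only relevant where $\phi_i(b)>0$, where $\tilde s_i$ is defined, and every coordinate lies over $b$, so $\sigma$ lands in the fibrewise join and is a section of the $(k+1)$-fold fibrewise join. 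Conversely, given a section $\sigma$ of $p*\cdots*p$, write $\sigma(b)=\sum_{i=0}^{k}t_i(b)\,x_i(b)$ with continuous $t_i\colon B\to[0,1]$, $\sum_i t_i\equiv 1$, and $x_i(b)\in E$ defined whenever $t_i(b)>0$; since $\sigma$ is a section, $p(x_i(b))=b$ there. Then the sets $U_i:=t_i^{-1}((0,1])$ form an open cover of $B$, and $b\mapsto x_i(b)$ is a continuous section of $p$ over $U_i$, whence $\secat(p)\le k$.

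I expect the only genuine care needed is in the topology of the (fibrewise) join: namely the fact that the weight functions $t_i$ and the partial coordinate maps $x_i$ are continuous precisely on the open locus $\{t_i>0\}$, which is what makes $\sigma$ continuous in the first half of (b) and what makes the extracted local sections continuous in the second half. The rectification of homotopy sections to strict sections via the fibration property is the other point to state carefully, but it is routine; everything else is formal.
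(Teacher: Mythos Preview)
The paper does not prove this proposition: it is stated with attribution to \cite{Schwarz66} and used as a black box throughout. Your argument is a correct reconstruction of the standard Schwarz proof---the composition in (a) and the partition-of-unity/fiberwise-join dictionary in (b)---so there is nothing to compare against beyond the citation, and your proposal is fine as an explicit substitute for it.
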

		
In this work we are mostly interested in the sectional category of inclusion of subgroups $\iota \colon H \hookrightarrow G$, as introduced in \cite{BCE22}. Given such a group monomorphism, there is a covering map $ K(\iota,1) \colon K(H,1) \to K(G,1)$ between the corresponding Eilenberg--MacLane spaces induced by $\iota$ such that $\pi_1\big(K(\iota,1)\big)=\iota$.
				
\begin{definition}
The \emph{sectional category} of the subgroup inclusion $H \hookrightarrow G$, is defined as $$\secat(K(\iota,1) \colon K(H,1) \to K(G,1)).$$ 
\end{definition}
				
We briefly recall the definition of sequential topological complexities.
				
\begin{definition} \label{Def:TCr}
Let $X$ be a path-connected topological space. For each $r \in \NN$ with $r \geq 2$ the map $$p_r: PX\to X^r, \qquad p_r(\gamma) = \left(\gamma(0),\gamma\big(\tfrac{1}{r-1}\big),\gamma\big(\tfrac{2}{r-1}\big),\dots,\gamma\big(\tfrac{r-2}{r-1}\big), \gamma(1)  \right),$$
is a fibration. The \emph{$r$-th sequential topological complexity of $X$} is defined as
$$\tc_r(X) := \secat(p_r:PX \to X^r).$$
\end{definition}
				
Notice that, by definition, the notion of sequential topological complexity includes that of the classic topological complexity, which occurs as $\tc_2(X)=\tc(X).$ 

As noted in \cite{BGRT} the $r$-th sequential topological complexity of $X$ can be defined as 
\begin{equation}\label{EqTCrDiag}
\tc_r(X) = \secat(e^X_r)
\end{equation} the sectional category of the fibration $$\begin{tikzcd}[row sep=0pt,column sep=1pc]
e^X_r \colon X^{J_r} \arrow{r} & X^r \\
{\hphantom{(\gamma_0) \colon{}}} \gamma \arrow[mapsto]{r} & (\gamma(1_1), \cdots, \gamma(1_r)). 
\end{tikzcd} $$ where $J_r$ is the wedge of $r$ unit intervals $[0,1]$ (with $0$ as the base point for each of them), and $1_i$ stands for $1$ in the $i^{th}$ interval, for every $1 \leq i \leq r$. It is easy to see that indeed $e_r$ and $p_r$ are homotopy equivalent fibrations. Furthermore, as $e_r^X$ is the standard fibrational substitute for the iterated diagonal map $\Delta_{X,r} \colon X \rightarrow  X^r$ one can equivalently define $\tc_r(X) = \secat(\Delta_{X,r}).$
		
\subsection{Classifying spaces with respect to families of subgroups.}
		
A family $\mathcal{F}$ of subgroups of $G$ is said to be \textit{full} provided that it is non-empty and is closed under conjugation and subgroups. Given $H \leqslant G$, write $\langle H \rangle$ for the smallest full family of subgroups of $G$ containing $H$.
		
\begin{definition}
The \textit{classifying space of $G$ with respect to $\mathcal{F}$} is a $G$-CW complex $E_{\mathcal{F}}G$ satisfying the following conditions:
			
\begin{itemize}
\item every isotropy group of $E_{\mathcal{F}}G$ belongs to $\mathcal{F}$,
\item for any $G$-CW complex $X$ with all isotropy groups in $\mathcal{F}$ there exists a unique (up to $G$-equivariant homotopy) $G$-equivariant map $X \to E_{\mathcal{F}}G$. 
\end{itemize}
\end{definition} 
It is immediate, for any group $G$, that one can recover the usual total space of the universal bundle of $K(G,1)$ by putting $EG = E_{\{1\}}G$. From the definition, one observes that there is, up to $G$-homotopy, an unique canonically defined $G$-equivariant map $\rho \colon EG \to E_{\mathcal{F}}G$.
		
By the \textit{orbit category} of an abstract group $G$ associated to a family $\mathcal{F}$ of subgroups of~$G$, written $\textrm{Or}_{\mathcal{F}}G$, we understand a category whose object are homogeneous $G$-spaces $G/K$ for $K \in \mathcal{F}$, and morphisms are $G$-equivariant maps between them. An $\textrm{Or}_{\mathcal{F}}G$-\textit{module} is a contravariant functor from $\textrm{Or}_{\mathcal{F}}G$ to the category of abelian groups. An $\textrm{Or}_{\mathcal{F}}G$-\textit{homomorphism} of such modules is a natural transformation. In particular, the notion of a projective $\textrm{Or}_{\mathcal{F}}G$-module is defined.
		
The \textit{Bredon cohomological dimension} of $G$ with respect to $\mathcal{F}$, denoted $\cd_{\mathcal{F}}\,G$, is the length of the shortest possible $\textrm{Or}_{\mathcal{F}}G$-projective resolution of $\underline{\mathbb{Z}}$, where $\underline{\mathbb{Z}}$ is a constant $\textrm{Or}_{\mathcal{F}}G$-module which sends every morphism to $\textrm{id} \colon \mathbb{Z} \to \mathbb{Z}$. 
		
In \cite{FGLO17} the authors gave bounds of topological complexity of a space of type $K(\pi,1)$ in terms of the smallest dimension $n$ at which the canonical map between $E\pi$ and $E_{\langle \Delta_{\pi} \rangle}(\pi \times \pi)$ factors through the $n$-skeleton of $E_{\langle \Delta_{\pi} \rangle}(\pi \times \pi)$ and, as a consequence, in terms of Bredon cohomological dimension. In \cite{FOSequ} such bounds were generalized for sequential topological complexity with $r > 2$, and then to sectional category of arbitrary subgroup inclusions in \cite{BCE22}.
		
\subsection{$\Agenus$ of $G$-spaces}
		
A complete and thoroughful treatment of the fundamental properties of $\Agenus$ can be found in the classic book of Bartsch, \cite{Bartsch}.
		 
\begin{definition}
Let $G$ be a group and $\Aa$ a family of $G$-spaces. The $\Agenus$ of a $G$-space $X$ is defined as the smallest integer $k \geq 0$ such that there exists an open cover by $G$-invariant subsets $\{ U_0, \cdots, U_k \}$ of $X$ satisfying that, for every $0 \leq i \leq k$ there exists $A_i \in \Aa$ and a $G$-equivariant map $U_i \rightarrow A_i$.
\end{definition}
\begin{remark}
Naturally the above definition encompasses the non-equivariant setting by simply considering trivial $G$-actions. 
\end{remark}			
In particular, the notion of $\Agenus$ can be seen as a special case of the more general concept of $\Aa$-category, which in the equivariant context was due to M. Clapp and D. Puppe in \cite{ClP1}, as a way of generalizing the notion of Lusternik-Schnirelmann category and stuying critical point theory with respect to group actions, see \cite{ClP2}. For commodity of the reader, we proceed to recall the definition here:
\begin{definition}
Let $X$ and $Y$ be $G$-spaces. We define the $\mathcal{A}$-\textit{category} of a $G$-equivariant map $f: X \rightarrow Y$, denoted by $\mathcal{A}\mbox{-}\cat(f)$, as the smallest integer $k \geq 0$ such that there exists an open cover by $G$-invariant subsets $\{ U_0, \cdots, U_k \}$ of $X$ such that for every $0 \leq i \leq k$ there is some $A_i \in \mathcal{A}$ and $G$-equivariant maps $\alpha_i: X_i \rightarrow A_i$ and $\beta_i: A_i \rightarrow Y$ satisfying that the restriction $f_{|X_i}$ is $G$-homotopically equivalent to $\beta_i \circ \alpha_i$. If no such integer exists, we set $\mathcal{A}$-$\cat(f) = \infty.$

We define the $\mathcal{A}$-category \index{$\mathcal{A}$-category} of a $G$ space $X$ as $\mathcal{A}$-$\cat(X) := \mathcal{A}\mbox{-}\cat(\id_X)$.
\end{definition}

It is not difficult to see from the definition that the $\Agenus$ of a space can be recasted as the $\mathcal{A}$-category of the constant map, i.e. $$\Agenus(X) := \mathcal{A}\mbox{-}\cat(X \rightarrow \ast).$$

The relationship between different versions of topological complexity and the $\Aa$-category of the configuration space has been studied before. Indeed, N. Iwase and M. Sakai in \cite{Iwase10} first recasted $\tc(X)$ in terms of the $\Aa$-category of $X$ for a suitable family $\Aa$, while W. Lubawski and W. Marzantowicz in \cite{LubMar14} and A. Ángel, H. Colman, M. Grant and J. Oprea in \cite{ACGO20} studied the situation for equivariant versions of topological complexity. It is important to mention as well the work of P. Capovilla, C. Loeh and M. Moraschini, \cite{CLM}, where the relationship between a specialized case of the ideas of Clapp and Puppe (the amenable category, first introduced in \cite{Larranaga13}) and topological complexity is studied. However, to our knowledge, all these approaches have been only from the perspective of the $\Aa$-category of a space, and no attempt has been made to exploit the specificities of the properties of $\Agenus$ in order to study its applications to the setting of topological complexity.
	\vspace{0.2cm}
	
The following proposition summarizes some of the basic properties of $\Agenus$ that we will make use of later on (with a special emphasis on its characterization through the join). 
		
\begin{proposition}[\cite{Bartsch} Proposition 2.9, Proposition 2.15 and Proposition 2.17]\label{propgenus}
$\Agenus$ satisfies the following properties:
\begin{enumerate}[(a)]
\item $\Agenus(X)$ is the smallest integer $k \geq 0$ such that there exists $A_0, \cdots, A_k \in \Aa$ and a $G$-equivariant map $$X \rightarrow A_0 \ast \cdots \ast A_k.$$
\item \emph{Monotonicity:} If there exists a $G$-equivariant map $X \rightarrow Y$ then $\Agenus(X) \leq \Agenus(Y)$ 
\item \emph{Normalization:} If $A \in \Aa$ then $\Agenus(A) = 0$. 
\item Let $H \leq G$ closed, consider a family $\Aa$ of $G$-spaces and a family $\mathcal{B}$ of $H$-spaces. Then, for any $G$-space $X$ $$ \Bgenus(X) \leq (\Agenus(X) + 1)(\max \{ \Bgenus(A) \text{ : } A \in \Aa \} + 1) -1 $$
\end{enumerate}
\end{proposition}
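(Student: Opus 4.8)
The plan is to treat the four items separately, observing that (b) and (c) are immediate from the definition, that (a) is the substantive point (the equivariant form of Schwarz's join construction), and that (d) is a formal consequence of (a) together with associativity of the join. For (b): if $\Agenus(Y)=k$ is witnessed by a $G$-invariant open cover $\{V_0,\dots,V_k\}$ of $Y$ with $G$-equivariant maps $g_i\colon V_i\to A_i$, $A_i\in\Aa$, and $f\colon X\to Y$ is $G$-equivariant, then $\{f^{-1}(V_i)\}_{i=0}^{k}$ is a $G$-invariant open cover of $X$ and the maps $g_i\circ f|_{f^{-1}(V_i)}\colon f^{-1}(V_i)\to A_i$ are $G$-equivariant, whence $\Agenus(X)\le k$. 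For (c): the singleton cover $\{A\}$ together with $\id_A$ gives $\Agenus(A)\le 0$.

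For (a) I would prove both inequalities. Assume first $\Agenus(X)=k<\infty$, witnessed by a $G$-invariant open cover $\{U_0,\dots,U_k\}$ with $G$-equivariant maps $\phi_i\colon U_i\to A_i$. Choosing a $G$-invariant partition of unity $\{\rho_i\}_{i=0}^{k}$ subordinate to $\{U_i\}$ (which exists under the standing hypotheses on $G$-spaces, e.g. for $G$-CW complexes or paracompact $G$-spaces), define $\Phi\colon X\to A_0\ast\cdots\ast A_k$ by $\Phi(x)=\rho_0(x)\phi_0(x)+\cdots+\rho_k(x)\phi_k(x)$, with the convention that a summand of weight zero is absent; since $\operatorname{supp}\rho_i\subseteq U_i$, the undefinedness of $\phi_i$ off $U_i$ is harmless and $\Phi$ is continuous, and $\Phi$ is $G$-equivariant because each $\rho_i$ is $G$-invariant and each $\phi_i$ is $G$-equivariant. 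Conversely, given a $G$-equivariant map $\Psi\colon X\to A_0\ast\cdots\ast A_k$, let $W_i$ be the open $G$-invariant subset of the join on which the $i$-th join coordinate is positive; projection to the $A_i$-factor is a well-defined continuous $G$-equivariant map $W_i\to A_i$, so $\{\Psi^{-1}(W_i)\}_{i}$ is a $G$-invariant open cover of $X$ admitting $G$-equivariant maps to the $A_i$, giving $\Agenus(X)\le k$. Combining the two directions identifies $\Agenus(X)$ with the least $k$ admitting such a join map (the case $\Agenus(X)=\infty$ following by contraposition).

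For (d), set $k=\Agenus(X)$ and $m=\max\{\Bgenus(A):A\in\Aa\}$, and assume both are finite (otherwise there is nothing to prove). By (a) there is a $G$-equivariant, hence $H$-equivariant, map $X\to A_0\ast\cdots\ast A_k$ with $A_i\in\Aa$; and, applying (a) to the group $H$ and the family $\mathcal{B}$, for each $i$ there is an $H$-equivariant map $A_i\to B_{i,0}\ast\cdots\ast B_{i,m}$ with $B_{i,j}\in\mathcal{B}$. Forming the join of these $H$-equivariant maps and using the $H$-equivariant homeomorphism $(B_{0,0}\ast\cdots\ast B_{0,m})\ast\cdots\ast(B_{k,0}\ast\cdots\ast B_{k,m})\cong B_{0,0}\ast\cdots\ast B_{k,m}$ coming from associativity of the join, one obtains an $H$-equivariant map from $X$ to a join of $(k+1)(m+1)$ members of $\mathcal{B}$; a final application of (a) (now for $H$ and $\mathcal{B}$) yields $\Bgenus(X)\le(k+1)(m+1)-1$.

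The only step requiring genuine care is the forward direction of (a): one must confirm that the standing hypotheses really do provide a $G$-invariant partition of unity subordinate to an arbitrary $G$-invariant open cover, and that in the chosen model of the finite join both $x\mapsto\sum_i\rho_i(x)\phi_i(x)$ and the coordinate projections $W_i\to A_i$ are continuous. Once (a) is established, (d) is essentially formal and (b), (c) are immediate from the definitions.
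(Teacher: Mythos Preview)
The paper does not give its own proof of this proposition: it is stated with a citation to \cite{Bartsch} (Propositions 2.9, 2.15 and 2.17) and used as a black box throughout. Your argument is correct and is in fact the standard proof one finds in Bartsch's book, so there is nothing to compare against; the only genuine hypothesis you flag---existence of a $G$-invariant partition of unity subordinate to a $G$-invariant open cover---is exactly the standing assumption Bartsch works under (and is satisfied for the $G$-CW complexes and paracompact $G$-spaces appearing in this paper).
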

\vspace{0.2cm}
Let $h^{\ast}$ be a multiplicative $G$-equivariant cohomology theory, and $I \subseteq h^{\ast}(\ast)$ an ideal.
\begin{definition}
The $(\Aa, h^{\ast},I)$-length of a $G$-space $X$ is defined as the smallest integer $k \geq 0$ such that there exist $A_0, \cdots, A_k \in \Aa$ satisfying that, for any $0 \leq i \leq k$ and $$\alpha_i \in I \cap \ker[h^{\ast}(\ast) \rightarrow h^{\ast}(A_i)]$$ we have $ p_{X}^{\ast}(\alpha_1) \cup \cdots \cup p_{X}^{*}(\alpha_k) = 0 $, where $p_X: X \rightarrow \ast$.
\end{definition}
We denote it by $l_{\Aa,h^{\ast},I}(X)$ (or simply $l(X)$ when the required inputs are clear). The length thus defined acts as a lower estimate for $\Agenus$ (see \cite[Corollary 4.9]{Bartsch}), i.e. $l_{\Aa,h^*,I}(X) \leq \Agenus(X)$.
  
 \section{Sectional category and $\tc_r$ as $\Aa$-genus}

A quick glance on both property $(b)$ of Proposition \ref{SecatProperty1} and the basic properties of $\Agenus$ described in the previous section allow us to immediately derive that $\Agenus$ constitutes a lower bound for general sectional category of fibrations.
 
\begin{proposition}
Let $p \colon E \rightarrow B$ be a fibration, and put $\Aa := \{ E \}$. Then we have $$ \Agenus(B) \leq \secat(p). $$
\end{proposition}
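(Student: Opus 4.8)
The plan is to combine the join characterisation of sectional category from Proposition \ref{SecatProperty1}(b) with the join characterisation of $\Agenus$ from Proposition \ref{propgenus}(a). Throughout we work in the non-equivariant setting, i.e. with the trivial group acting trivially on all spaces, as permitted by the remark following the definition of $\Agenus$; in that setting every continuous map is automatically $G$-equivariant.

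First I would dispose of the trivial case: if $\secat(p) = \infty$ there is nothing to prove, so assume $\secat(p) = k < \infty$. By Proposition \ref{SecatProperty1}(b) the $(k+1)$-fold fibrewise join
$$ \underbrace{p * \cdots * p}_{k+1} \colon E *_B \cdots *_B E \longrightarrow B $$
admits a section $s \colon B \to E *_B \cdots *_B E$. Now recall that, by the very definition recalled in Proposition \ref{SecatProperty1}(b), the space $E *_B \cdots *_B E$ is a subspace of the ordinary join $E * \cdots * E$ ($k+1$ copies), so there is a tautological inclusion $j \colon E *_B \cdots *_B E \hookrightarrow E * \cdots * E$. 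The composite $j \circ s \colon B \to E * \cdots * E$ is then a continuous map from $B$ into the $(k+1)$-fold join of copies of $E$. Since $\Aa = \{E\}$, taking $A_0 = \cdots = A_k = E$ in Proposition \ref{propgenus}(a) shows $\Agenus(B) \leq k = \secat(p)$, as desired.

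There is essentially no serious obstacle here; the only points requiring a line of care are the reduction to the finite case and the observation that the fibrewise join sits inside the ordinary join as a subspace, so that the comparison map $j$ is literally an inclusion rather than something to be constructed. One should also keep track of the off-by-one convention: $\secat(p) \leq k$ corresponds to a section of a $(k+1)$-fold fibrewise join, which matches the $(k+1)$-tuple $A_0, \dots, A_k$ appearing in the join description of $\Agenus$.
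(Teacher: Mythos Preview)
Your argument is correct and is essentially identical to the paper's own proof: take a section of the $(k+1)$-fold fibrewise join, compose with the natural inclusion into the ordinary join, and apply Proposition~\ref{propgenus}(a). The only differences are cosmetic---you spell out the infinite case and the off-by-one bookkeeping explicitly---while the paper leaves these implicit.
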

\begin{proof}
Let $k \in \NN_0$ be such that $\secat(p) \leq k$. By the aforementioned Proposition \ref{SecatProperty1} $(b)$, that means there exists a section of the $(k+1)$-fiberwise join of $p$, which means a map $$s \colon B \rightarrow \underbrace{E *_B \cdots *_B E}_{k+1} \hookrightarrow \underbrace{E* \cdots *E}_{k+1}$$ where the last arrow is just the natural inclusion of the fiberwise join into the iterated join. The claim then immediately follows from $(a)$ of Proposition \ref{propgenus}.
\end{proof}
In this section we obtain the aforementioned characterization of the sectional category of a group monomorphism and the $r^{th}$-sequential topological complexity of a $K(G,1)$-space for every $r \geq 2$ in terms of $\Agenus$. In fact, we will prove a more general statement, that the sectional category of every connected covering can be seen as an $\Agenus$ for a suitable family $\Aa$. From there, we will easily infer the rest of characterizations. All the groups considered throughout this section will be taken as discrete.

Throughout the rest of this work, whenever we have a ``nice enough'' topological space $X$ (usually a path connected $CW$-complex) we will denote its universal cover by $\widetilde{X}$, with universal covering map $\widetilde{p_X} \colon \widetilde{X} \rightarrow X$. Naturally, we regard $\widetilde{X}$ as a $\pi_1(X)$-space by deck transformations. 
 
 \begin{theorem} \label{SecatAsAgenus}
 	Let $X$ be a path connected $CW$-complex. If $q: \widehat{X} \rightarrow X$ is a connected covering, then $$\secat(q) = \Agenus(\widetilde{X})$$ where $\Aa = \left\{ \quot{\pi_1(X)}{\pi_1(\widehat{X})}  \right\}$.
 \end{theorem}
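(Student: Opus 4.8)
The plan is to prove the two inequalities $\secat(q)\le\Agenus(\widetilde X)$ and $\Agenus(\widetilde X)\le\secat(q)$ separately, exploiting the join characterisations available on both sides: Proposition~\ref{SecatProperty1}(b) for sectional category and Proposition~\ref{propgenus}(a) for $\Agenus$. The bridge between the two is the following elementary observation, which I would establish first as the technical core: since $q\colon\widehat X\to X$ is a connected covering classified by a subgroup $K:=\pi_1(\widehat X)\le\pi:=\pi_1(X)$, the pullback of $q$ along the universal covering $\widetilde{p_X}\colon\widetilde X\to X$ is $\pi$-equivariantly homeomorphic (over $\widetilde X$) to the trivial covering $\widetilde X\times(\pi/K)\to\widetilde X$, where $\pi$ acts diagonally, on $\widetilde X$ by deck transformations and on $\pi/K$ by left translation. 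Equivalently, the fibre bundle $q$ is the Borel construction $\widetilde X\times_\pi(\pi/K)\to\widetilde X\times_\pi\ast=X$. This is the standard description of a covering as an associated bundle of the universal cover, and it is what converts statements about $q$ over $X$ into $\pi$-equivariant statements about $\widetilde X$ with target the single $G$-space $A:=\pi/K$.

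For the inequality $\secat(q)\le\Agenus(\widetilde X)$: suppose $\Agenus(\widetilde X)\le k$, so by Proposition~\ref{propgenus}(a) there is a $\pi$-equivariant map $\widetilde X\to A\ast\cdots\ast A$ ($k+1$ copies). Applying the Borel construction $(-)\times_\pi\ast$, and using that the Borel construction of an iterated join of copies of $A$ over $X$ is precisely the $(k+1)$-fold fibrewise join of $q$ with itself over $X$ (because $\widetilde X\times_\pi(A\ast\cdots\ast A)=(\widetilde X\times_\pi A)\ast_X\cdots\ast_X(\widetilde X\times_\pi A)$, a fibrewise-join version of the previous paragraph), this $\pi$-map descends to a section $X\to E\ast_X\cdots\ast_X E$ of the $(k+1)$-fold fibrewise join of $q$. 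By Proposition~\ref{SecatProperty1}(b) this gives $\secat(q)\le k$. Here one has to be a little careful that a general $\pi$-map need not respect the join coordinates, but one can first note that $A$ is a discrete $\pi$-space, so the $\pi$-map $\widetilde X\to A^{\ast(k+1)}$ can be homotoped to a fibre-coordinate-preserving one, or simply observe that any $\pi$-map into $A^{\ast(k+1)}$ composed with the quotient already lands in the fibrewise join; this bookkeeping is the one mildly delicate point.

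For the reverse inequality $\Agenus(\widetilde X)\le\secat(q)$: suppose $\secat(q)\le k$, so by Proposition~\ref{SecatProperty1}(b) the $(k+1)$-fold fibrewise join $E\ast_X\cdots\ast_X E\to X$ admits a section $s$. Pulling back along $\widetilde{p_X}\colon\widetilde X\to X$ and using the identification of the pullback with the trivial $A$-bundle (and its fibrewise joins), the section $s$ induces a $\pi$-equivariant map $\widetilde X\to A\ast\cdots\ast A$ ($k+1$ copies), whence $\Agenus(\widetilde X)\le k$ by Proposition~\ref{propgenus}(a). Equivalently one may run this through the Schwarz-type argument already used in the Proposition preceding the theorem, applied $\pi$-equivariantly to the pulled-back fibration over $\widetilde X$, which is a trivial bundle with fibre $A\in\Aa$. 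Combining the two inequalities yields $\secat(q)=\Agenus(\widetilde X)$. The main obstacle, as indicated, is purely the identification of fibrewise joins of the covering $q$ with Borel constructions of joins of $\pi/K$, and keeping the $\pi$-equivariance honest when passing between sections over $X$ and equivariant maps over $\widetilde X$; once that dictionary is in place the theorem follows formally from the two join characterisations.
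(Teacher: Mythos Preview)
Your proposal is correct and follows essentially the same route as the paper: identify the covering $q$ with the associated bundle $\widetilde X\times_\pi(\pi/K)\to X$, establish the homeomorphism $\widetilde X\times_\pi(\pi/K)^{\ast(k+1)}\cong(\widetilde X\times_\pi(\pi/K))^{\ast_X(k+1)}$, and then use the standard correspondence between sections of an associated bundle and equivariant maps to the fibre (the paper cites Husemoller for this) together with the join characterisations on both sides. The only cosmetic difference is that the paper proves $\secat(q)\ge\Agenus(\widetilde X)$ directly from the open-cover definitions rather than via joins, whereas you run joins in both directions; and your worry about ``respecting join coordinates'' is unnecessary, since the Borel-construction/fibrewise-join identification is a genuine homeomorphism over $X$, so any $\pi$-map $\widetilde X\to(\pi/K)^{\ast(k+1)}$ automatically yields a section of the fibrewise join without further adjustment.
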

 
 \begin{proof}
 	First recall that, by means of the associated bundle construction, we can regard the connected covering $q \colon \widehat{X} \rightarrow X$ as a bundle \begin{equation}\label{inducedbundle}
 	q_0: \widetilde{X} \times_{\pi_1(X)} \left(\quot{\pi_1(X)}{\pi_1(\widehat{X})} \right) \rightarrow X
 	\end{equation} associated to the principal $\pi_1(X)$-bundle of the universal covering $\widetilde{p_X} \colon \widetilde{X} \rightarrow X$.
 	
 	Take $\{ U_i \}_{0 \leq i \leq n}$ an open covering of $X$ such that for each $0 \leq i \leq n$ there exists $s_i \colon U \rightarrow \widehat{X}$ a local section of the fibration $q$ over $U_i$. Notice that the restriction of $q_0$ to $U_i$ is again a covering. Moreover, if  $q$ has a local section over $U$, then there is a naturally induced local section of $q_0$. By \cite[Chapter 4, Theorem 8.1]{Husemoller66}, the sections of $$q_0 \colon \widetilde{p_X}^{-1}(U_i) \times_{\pi_1(X)} \left ( \quot{\pi_1(X)}{\pi_1(\widehat{X})} \right)$$ are in one-to-one correspondence with $\pi_1(X)$-equivariant maps $$\widetilde{p_X}^{-1}(U_i) \rightarrow \quot{\pi_1(X)}{\pi_1(\widehat{X})}.$$ Then, by putting $\Aa := \left\{ \quot{\pi_1(X)}{\pi_1(\widehat{X})}  \right\}$ we observe that the family of sets $\{\widetilde{p_X}^{-1}(U_i)\}_{0 \leq i \leq n}$ constitutes an open cover of $\widetilde{X}$ for $\Agenus$, from where we observe that $$\secat(q) = \secat(q_0) \geq \Agenus(\widetilde{X}).$$
 	
 	For the reverse inequality, notice that Proposition \ref{SecatProperty1} (b) and the induced bundle \refeq{inducedbundle} indicates that $\secat(q)$ corresponds with the minimal integer $k \geq 0$ such that the $(k+1)$-fold fiberwise join fibration $$ \ast_X^{k+1} \left[ q_0 \colon \widetilde{X} \times_{\pi_1(X)} \left( \quot{\pi_1(X}{\pi_1(\widehat{X})} \right) \rightarrow X \right] $$ has a section. The map $$ \widetilde{X} \times_{\pi_1(X)} \ast^{k+1} \left[ \left( \quot{\pi_1(X)}{\pi_1(\widehat{X})} \right) \right] \rightarrow \ast_X^{k+1} \left[  \widetilde{X} \times_{\pi_1(X)} \left( \quot{\pi_1(X)}{\pi_1(\widehat{X})} \right) \right]  $$ given by the identifications $$ \pi_1(X) \cdot \left(x,\displaystyle{\sum_{i=0}^{k+1}} t_i g_i \pi_1(\widehat{X}) \right) \mapsto \displaystyle{\sum_{i=0}^{k+1}} t_i \pi_1(X) \cdot \left(x, g_i \pi_1(\widehat{X})\right) $$ can be easily seen as a homeomorphism, which allows us to identify $\ast_X^{k+1}(q_0)$ with a fibration of the form $$ \widetilde{X} \times_{\pi_1(X)} \ast^{k+1} \left[ \left( \quot{\pi_1(X)}{\pi_1(\widehat{X})} \right) \right] \rightarrow X $$ and once again by \cite[Chapter 4, Theorem 8.1]{Husemoller66}, sections of this fibration are in one to one correspondence with $\pi_1(X)$-equivariant maps of the form $$ \widetilde{X} \rightarrow \ast^{k+1} \left[ \left( \quot{\pi_1(X)}{\pi_1(\widehat{X})} \right) \right]. $$ Then statement (a) of Proposition \ref{propgenus} shows that $\Agenus(\widetilde{X}) \geq \secat(q_0)$, hence we conclude the desired equality $$ \secat(q) = \secat(q_0) = \Agenus(\widetilde{X}). $$
 \end{proof}

 An immediate consequence of the previous theorem and our definition of $\secat(H \hookrightarrow G)$ is the characterization of the sectional category of any subgroup inclusion in terms of the $\Agenus$. We make it explicit in the following corollary.
 
 \begin{corollary}\label{SecatGrpAsAgenus}
 	Let $G$ be a torsion-free group, and $H \leqslant G$. Then we have $$ \secat(H \hookrightarrow G) = \Agenus(EG) $$ where $\Aa = \{ G/H \}.$
 \end{corollary}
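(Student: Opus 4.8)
The plan is to deduce Corollary \ref{SecatGrpAsAgenus} as a direct specialization of Theorem \ref{SecatAsAgenus}. First I would take $X = K(G,1)$, so that $\pi_1(X) = G$ and the universal cover $\widetilde{X} = EG$ is the total space of the universal bundle, regarded as a $G$-space via deck transformations. Next, given $H \leqslant G$ torsion-free, I would let $\widehat{X} = K(H,1)$ and take $q$ to be the covering map $K(\iota,1) \colon K(H,1) \to K(G,1)$ induced by the inclusion $\iota \colon H \hookrightarrow G$; this is precisely the covering appearing in the definition of $\secat(H \hookrightarrow G)$, and it is connected since $H$ and $G$ are both path-connected on the level of classifying spaces. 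One should also note $\pi_1(\widehat{X}) = H$ sits inside $\pi_1(X) = G$ as the subgroup $\iota(H) = H$, so the coset space $\quot{\pi_1(X)}{\pi_1(\widehat{X})}$ is exactly $G/H$.

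With these identifications in place, Theorem \ref{SecatAsAgenus} immediately yields
\begin{equation*}
\secat\big(K(\iota,1) \colon K(H,1) \to K(G,1)\big) = \Agenus(EG), \qquad \Aa = \{\, G/H \,\}.
\end{equation*}
Since the left-hand side is by definition $\secat(H \hookrightarrow G)$, this is the claimed equality. The only genuinely substantive point to verify is that the Eilenberg--MacLane spaces can be chosen as CW-complexes so that Theorem \ref{SecatAsAgenus} applies, and that the covering $K(\iota,1)$ realizes the subgroup $H$ on fundamental groups with the correct action — but both of these are standard facts about classifying spaces recalled in the Preliminaries, so there is no real obstacle here. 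I would also remark that torsion-freeness of $G$ (hence of $H$) is what guarantees that $K(H,1)$ and $K(G,1)$ have the homotopy type of finite-dimensional or at least well-behaved CW-complexes in the relevant applications, though for the bare equality it merely ensures the Eilenberg--MacLane spaces exist as aspherical CW-complexes so the hypotheses of the theorem are met.

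In short, the proof is a one-line invocation of the theorem once the dictionary $X \leftrightarrow K(G,1)$, $\widehat{X} \leftrightarrow K(H,1)$, $\widetilde{X} \leftrightarrow EG$, $\quot{\pi_1(X)}{\pi_1(\widehat{X})} \leftrightarrow G/H$ is spelled out. The "hard part," such as it is, was already done in Theorem \ref{SecatAsAgenus}; here the only care needed is bookkeeping with fundamental groups and making explicit that the covering induced by $\iota$ is the one whose sectional category defines $\secat(H \hookrightarrow G)$.
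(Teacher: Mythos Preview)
Your proposal is correct and follows essentially the same approach as the paper: both deduce the corollary by applying Theorem~\ref{SecatAsAgenus} to the covering $K(\iota,1)\colon K(H,1)\to K(G,1)$, with the paper simply making the model $EG/H \to EG/G$ explicit. Your side remarks about torsion-freeness are a bit speculative (Eilenberg--MacLane CW-complexes exist regardless of torsion), but this does not affect the argument.
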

 \begin{proof}
 	By definition $\secat(\iota \colon H \hookrightarrow G)$ coincides with the sectional category of the fibration $$ K(\iota,1) \colon K(H,1) \rightarrow K(G,1). $$ The covering map $EG/H \rightarrow EG/G$ associated to $H$ provides an explicit fibration whose sectional category corresponds with $\secat(H \hookrightarrow G)$. Therefore, the claim follows immediately from Theorem \ref{SecatAsAgenus}. 
 \end{proof}
 
 \begin{remark} Notice that an alternative proof of Corollary \ref{SecatGrpAsAgenus} is also implicit in the proof of \cite[Theorem 1.2]{BCE22}. Indeed, such proof indicates that $\secat(H \hookrightarrow G)$ corresponds with the smallest integer $k \geq 0$ such that there exists an equivariant map of the form $$ EG \rightarrow \ast^{k+1}(G/H). $$ Thus the result follows from Proposition \ref{propgenus} (a).
 \end{remark}
 
 Now we turn our attention to the corresponding characterization for sequential topological complexities. For the case of the diagonal subgroup inclusion $\Delta_{\pi,r} \hookrightarrow \pi^r$, we can reason as follows
 
 \begin{theorem}\label{TCrasAgenus}
 	Let $r \in \NN$ with $r \geq 2$, and let $X$ be a path connected $CW$-complex with $\pi_1(X) = \pi$. Put $$\Aa := \left\{ \quot{\pi^k}{\Delta_{\pi,r}} \right\}.$$ Then the following holds: 
 	\begin{enumerate}[(1)]
 		\item $\tc_r(X) \geq \Agenus(\widetilde{X}^r)$.
 		\item Furthermore, if $X$ is aspherical, then $\tc_r(X) = \Agenus(\widetilde{X}^r)$.
 	\end{enumerate}
 	
 \end{theorem}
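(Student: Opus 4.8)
The plan is to realise $\tc_r(X)$ as the sectional category of a suitable connected covering of $X^r$ and then apply Theorem~\ref{SecatAsAgenus} directly. Recall from \eqref{EqTCrDiag} and the discussion following it that $\tc_r(X) = \secat(\Delta_{X,r}\colon X\to X^r)$. The universal cover of $X^r$ is $\widetilde{X}^r$ (with the coordinate-wise deck action), and $\pi_1(X^r)=\pi^r$; so I would form the connected covering $q\colon \widehat{X^r}\to X^r$ corresponding to the subgroup $\Delta_{\pi,r}\leqslant \pi^r$. Then $\pi_1(X^r)/\pi_1(\widehat{X^r})=\pi^r/\Delta_{\pi,r}$, and Theorem~\ref{SecatAsAgenus} gives
$$\secat(q)=\Agenus\big(\widetilde{X}^r\big),\qquad \Aa=\{\pi^r/\Delta_{\pi,r}\},$$
which is exactly the family in the statement (with $\pi^k$ read as $\pi^r$).

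Next I would compare $\Delta_{X,r}$ with $q$. The diagonal $\Delta_{X,r}$ induces on $\pi_1$ the diagonal homomorphism $\pi\to\pi^r$, whose image is $\Delta_{\pi,r}=q_*\pi_1(\widehat{X^r})$; since $X$ is a $CW$-complex, the lifting criterion for coverings produces a map $\ell\colon X\to\widehat{X^r}$ with $q\circ\ell=\Delta_{X,r}$. (If one prefers honest fibrations, replace $\Delta_{X,r}$ by its fibrational substitute $e^X_r$, which is homotopy equivalent and has the same sectional category, and lift that.) Now the elementary monotonicity of sectional category under composition — the argument underlying Proposition~\ref{SecatProperty1}(a), applied with the fibration $q$ — shows that over any open set carrying a homotopy section of $\Delta_{X,r}$, postcomposition with $\ell$ produces a homotopy section of $q$; hence $\secat(q)\leq\secat(\Delta_{X,r})$. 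Combined with the previous paragraph this proves (1): $\tc_r(X)=\secat(\Delta_{X,r})\geq\secat(q)=\Agenus(\widetilde{X}^r)$.

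For (2), suppose in addition that $X$ is aspherical. Then $\widehat{X^r}$ is a $K(\Delta_{\pi,r},1)$, hence aspherical, and $\ell$ induces an isomorphism on fundamental groups (because $q_*\ell_*=\Delta_{\pi,r}$ is injective with image $q_*\pi_1(\widehat{X^r})$). A $\pi_1$-isomorphism between aspherical $CW$-complexes is a homotopy equivalence, so $\ell$ admits a homotopy inverse and $\Delta_{X,r}=q\circ\ell$ with $\ell$ invertible; the same composition monotonicity then also yields $\secat(\Delta_{X,r})\leq\secat(q)$, whence $\tc_r(X)=\secat(\Delta_{X,r})=\secat(q)=\Agenus(\widetilde{X}^r)$. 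Equivalently, for aspherical $X$ one may observe that $\Delta_{X,r}$ is, up to homotopy, precisely the defining covering $K(\Delta_{\pi,r},1)\to K(\pi^r,1)$ of $\secat(\Delta_{\pi,r}\hookrightarrow\pi^r)$, and conclude from Corollary~\ref{SecatGrpAsAgenus} together with the $\pi^r$-homotopy equivalence $\widetilde{X}^r\simeq E\pi^r$ and the monotonicity in Proposition~\ref{propgenus}(b).

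The only genuinely delicate points are bookkeeping: confirming that the sectional-category monotonicity is available in the precise form used — which is immediate for the fibration $q$ precomposed with $\ell$, essentially Proposition~\ref{SecatProperty1}(a) — and that one is free to pass between $\Delta_{X,r}$, its fibrational substitute $e^X_r$, and the covering $q$. Asphericity is used only in step (2), and only to upgrade $\ell$ from a $\pi_1$-isomorphism to a homotopy equivalence; this is exactly the obstruction that keeps (1) an inequality in general.
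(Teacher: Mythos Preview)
Your proof is correct and follows essentially the same approach as the paper: form the covering $q\colon\widehat{X^r}\to X^r$ associated to $\Delta_{\pi,r}$, invoke Theorem~\ref{SecatAsAgenus} to identify $\secat(q)=\Agenus(\widetilde{X}^r)$, lift the diagonal (respectively its fibrational substitute $e^X_r$) through $q$ via the lifting criterion, and use Proposition~\ref{SecatProperty1}(a) for the inequality, upgrading the lift to a homotopy equivalence in the aspherical case. The only cosmetic difference is that the paper works from the outset with the honest fibration $e^X_r\colon X^{J_r}\to X^r$ rather than $\Delta_{X,r}$, which you already flag as the preferred alternative.
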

 
 \begin{proof}
 	\begin{enumerate}[(a)]
 		\item Let $q \colon \widehat{X^r} \rightarrow X^r$ be the connected covering associated to the diagonal subgroup $\Delta_{\pi,r} \leqslant \pi^r$. Recall that $q_*(\pi_1(\widehat{X^r},\hat{x}_0)) \leqslant \pi_1(X^r,x_0)$ is either the diagonal group $\Delta_{X,r}$ or a subgroup of $\pi^r$ conjugated to it, depending just on the choice of basepoint $x_0 \in X$. Indeed, we can realize $\widehat{X^r}$ as $$\widehat{X^r} = \quot{\widetilde{X}^r}{\Delta_{\pi,r}},$$ i.e. as the orbit space of the $\Delta_{\pi,r}$-action on $\widetilde{X}^r$ obtained by restricting the $\pi^r$-action that is given as the $r$-fold product of the  $\pi$-action on $\widetilde{X}$ by deck transformations.
 		
 		As the fibration $e^X_r \colon X^{J_r} \rightarrow X^r$ is the standard fibrational substitute of the $r$-iterated diagonal map $\Delta_{X,r} \colon X \rightarrow X^r$, we know that, up to a choice of basepoint, $$(e^X_r)_*(\pi_1(X^{J_r})) = \Delta_{\pi,r}.$$ Therefore, we know that there exist basepoints $x \in X^{J_r}$ and $\widehat{x} \in \widehat{X^r}$ such that $$(e^X_r)_*(\pi_1(X^{J_r},x)) = q_*(\pi_1(\widehat{X^r},\widehat{x}))$$ and by the lifting criterion for coverings there exists a map $h \colon X^{J_r} \rightarrow \widehat{X^r}$ lifting the fibration $e^X_{r}$. As such, we can construct a commutative diagram of the form 
 		$$\begin{tikzcd}
 			X^{J_r} \ar[r,dashed,"h"] \ar[d, swap,"e^X_{r}"] & \widehat{X^r} \ar[d,"q"] \\
 			X^r \ar[r,"="] & X^r.
 		\end{tikzcd}$$ As a consequence of $(a)$ of Proposition \ref{SecatProperty1}, one observes that $$ \tc_r(X) \stackrel{\eqref{EqTCrDiag}}{=} \secat(e^X_r) \geq \secat(q) = \Agenus(\widetilde{X^r}) $$ where the last equality is a consequence of Theorem \ref{SecatAsAgenus} for the family $\Aa = \left\{ \quot{\pi^r}{\Delta_{\pi,r}} \right\}$. 
 		
 		\item Assume now that $X = K(\pi,1)$. Given the homotopy equivalence $X^{J_r} \simeq X$, we have that $X^{J_r}$ is an aspherical space. Under the hypothesis, we also know that the connected cover space $\widehat{X^r}$ is aspherical as well and, by the isomorphism induced at the level of fundamental groups, the map $h$ becomes an homotopy equivalence between $X \simeq X^{J_r}$ and $\widehat{X^r}$. Therefore we obtain the chain of equalities $$ \tc_r(X) = \secat(\Delta_r) = \secat(q \circ h) = \secat(q) = \Agenus(\widetilde{X^r}) $$ which finishes the proof. 
 	\end{enumerate} \end{proof}

 \begin{remark} \label{TCrcoverRemark} 
 	A couple of observations about the previous proof:
 	\begin{itemize}
 \item	If instead of using the fibrational substitute of the diagonal map we would want to proceed with the proof of Theorem \ref{TCrasAgenus} directly from the map $p_r \colon PX \rightarrow X^r$ as in Definition \ref{Def:TCr}, we can argue in the same way as noted in \cite[Section 6]{EFMO}. For convenience of the reader, we recall here the argument. One can define a continuous map of the form
 	$$\phi_X: PX \to \widehat{X^r}, \quad \phi(\gamma) = \rho(\tilde{\gamma}(0),\tilde{\gamma}(\tfrac{1}{r-1}),\dots,\tilde\gamma(\tfrac{r-2}{r-1}),\tilde\gamma(1)),$$
 	where here $\tilde\gamma$ denotes a lift of $\gamma$ to $\widetilde{X}$. This maps makes the following diagram commutative 
 	$$\begin{tikzcd}
 		PX \ar[dr, swap, "p_r"] \ar[rr, "\phi_X"] & & \widehat{X^r} \ar[dl, "q"] \\
 		& X^r &
 	\end{tikzcd}$$ Under the assumption of asphericity of $X$, the map $\phi_K$ becomes a fibre homotopy equivalence, and Dold's theorem informs us that $\phi_X$ is a fibre homotopy equivalence, so $\secat(p_r) = \secat(q)$, and we can conclude as above.
 	\item Note that the fact that $\tc_r(\pi)$ coincides with the sectional category of the covering of $(K(\pi,1))^r$  associated with the diagonal subgroup $\Delta_{\pi,r}$ was first proved in \cite[Lemma 4.2 and Corollary 4.3]{FOSequ}. Such proof, however, is performed under the hypothesis that $K(\pi,1)$ is finite, and through the identification of $\tc_r(\pi)$ with the sequential version of the $\mathcal{D}$-topological complexity. 
 	\end{itemize}
 \end{remark}
 
\medskip

 It is interesting as well to note that, as a consequence of the characterization, the usual zero-divisors cup length cohomological lower bound of (sequential) topological complexity can be recovered from the full generality of the properties of the length. Recall that the \emph{Borel equivariant cohomology} of a $G$-space $X$ is defined by $$H^*_G(X;M) = H^*(EG \times_G X;M). $$ Now take $h^*$ as the Borel equivariant cohomology theory, and $I = h^*(\ast) \cong H^*(B\pi^r)$. For any subgroup $H \leq \pi^r$ we have $h^*(\pi^r/H) \cong H^*(BH)$. Hence, for the iterated diagonal inclusion $\Delta_{\pi,r} \hookrightarrow \pi^r$ this becomes
$$ I \cap \ker\left[h^*(\ast) \rightarrow h^*\left(\quot{\pi^r}{\Delta_{\pi,r}}\right) \right] = \ker[H^*(B\pi^r) \rightarrow H^*(B \Delta_{\pi,r})], $$ and the induced map $p^*_{E \pi^r}$ is an isomorphism. 
 
 \section{Consequences of the characterization}
  
 The characterization of sectional category of connected coverings given in Theorem \ref{SecatAsAgenus} allows as well to derive new bounds for $\secat(H \hookrightarrow G)$ in terms of genus of classifying spaces for subgroup families and Bredon cohomological dimension, as we show in the following theorem.
 
 \begin{theorem} \label{Newbounds}
 	Let $G$ be a torsion-free group, $H \leqslant G$ and $\Aa = \{ G/H \}$.
 	\begin{enumerate}[(a)]
 		\item  For every full family of subgroups of $G$, namely $\mathcal{F}$, we have that $$ \secat(H \hookrightarrow G) \leq \Agenus(E_{\mathcal{F}}(G)). $$
 		\item For any subgroup $K \leqslant G$ subconjugate to $H$ such that $\cd_{\langle K \rangle} G \geq 3$ we have $$ \secat(H \hookrightarrow G) \leq \cd_{\langle K \rangle} G. $$
 		\item Under the hypothesis of (b), if $K \trianglelefteq G$ then $$ \secat(H \hookrightarrow G) \leq \cd(G/K). $$
 	\end{enumerate} 
 	
 \end{theorem}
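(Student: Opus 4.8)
The strategy is to reduce all three items to Corollary~\ref{SecatGrpAsAgenus}, the monotonicity of $\Agenus$ (Proposition~\ref{propgenus}(b)), and one auxiliary dimension estimate. For (a), recall that a full family $\mathcal{F}$ automatically contains the trivial subgroup, so $EG$ is a $G$-$CW$-complex all of whose isotropy groups lie in $\mathcal{F}$; hence the canonical $G$-equivariant map $\rho\colon EG \to E_{\mathcal{F}}G$ is available. Since $\Agenus$ is invariant under $G$-homotopy equivalence (apply monotonicity in both directions, so that $\Agenus(E_{\mathcal{F}}G)$ does not depend on the chosen model), monotonicity applied to $\rho$ together with Corollary~\ref{SecatGrpAsAgenus} yields
$$\secat(H\hookrightarrow G) = \Agenus(EG) \leq \Agenus(E_{\mathcal{F}}G), \qquad \Aa = \{G/H\},$$
which is precisely (a).

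The main new ingredient, which I would isolate as a lemma, is: \emph{if $Y$ is a $G$-$CW$-complex of finite dimension $n$ all of whose isotropy groups are subconjugate to $H$, then $\Agenus(Y) \leq n$ with $\Aa = \{G/H\}$.} This is the equivariant form of the classical bound $\cat \leq \dim$. For each $0 \leq k \leq n$ one builds a $G$-invariant open set $W_k \subseteq Y$ which contains the union of the open $k$-cells and $G$-deformation retracts onto it, hence onto a disjoint union $\coprod_{\sigma} G/G_{\sigma}$ of orbits indexed by the $k$-cells $\sigma$, with $G_{\sigma}$ the isotropy group of $\sigma$; the sets $W_0,\dots,W_n$ then cover $Y$. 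Each $G_{\sigma}$ is subconjugate to $H$, say $G_{\sigma} \leq g_{\sigma} H g_{\sigma}^{-1}$, so $xG_{\sigma} \mapsto x g_{\sigma} H$ is a well-defined $G$-equivariant map $G/G_{\sigma} \to G/H$; these assemble to a $G$-map $\coprod_{\sigma} G/G_{\sigma} \to G/H$, and precomposing with the retraction $W_k \to \coprod_{\sigma} G/G_{\sigma}$ gives a $G$-map $W_k \to G/H$. Hence $\{W_0,\dots,W_n\}$ witnesses $\Agenus(Y) \leq n$.

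Granting the lemma, (b) is immediate: if $K$ is subconjugate to $H$ then every member of $\langle K\rangle$ is subconjugate to $H$, so $E_{\langle K\rangle}G$ has all isotropy groups subconjugate to $H$; and by the Bredon-cohomological analogue of the Eilenberg--Ganea theorem (L\"uck--Meintrup), the hypothesis $\cd_{\langle K\rangle}G \geq 3$ provides a model of $E_{\langle K\rangle}G$ of dimension exactly $\cd_{\langle K\rangle}G$. Applying the lemma to that model and then (a) with $\mathcal{F} = \langle K\rangle$ gives $\secat(H\hookrightarrow G) \leq \Agenus(E_{\langle K\rangle}G) \leq \cd_{\langle K\rangle}G$. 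For (c), when $K\trianglelefteq G$ the family $\langle K\rangle$ is exactly the set of all subgroups of $K$, and pulling $E(G/K)$ back along $p\colon G\to G/K$ produces a $G$-$CW$ model of $E_{\langle K\rangle}G$: its $L$-fixed-point set is $(E(G/K))^{p(L)}$, which is contractible when $p(L)=1$ (i.e.\ $L\leq K$) and empty otherwise. This model has dimension $\dim E(G/K)$, and using the standard identity $\cd_{\langle K\rangle}G=\cd(G/K)$ (so the hypothesis of (b) reads $\cd(G/K)\geq 3$) one may take it of dimension $\cd(G/K)$; the lemma and (a) then give $\secat(H\hookrightarrow G) \leq \cd(G/K)$.

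I expect the only genuine work to lie in the lemma, and within it in making the equivariant skeletal cover and deformation retraction precise, so that the space retracted onto is indeed a disjoint union of orbits whose isotropy types are those occurring among the $k$-cells. Once that is in place, (b) and (c) become formal consequences of standard facts about classifying spaces for families: the L\"uck--Meintrup realization of $\cd_{\langle K\rangle}G$ by a model of that dimension (which is the source of the dimension hypothesis $\cd_{\langle K\rangle}G \geq 3$), together with, for (c), the identification of $E_{\langle K\rangle}G$ with the pullback of $E(G/K)$.
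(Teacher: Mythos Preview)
Your proof of (a) is identical to the paper's. For (b) and (c), your argument is correct but takes a genuinely different route from the paper.

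The paper does not prove a general ``$\Agenus \leq \dim$'' lemma. Instead, for (b) it uses the join description directly: by a result of Blowers the infinite join $\ast^{\infty}(G/K)$ is a model for $E_{\langle K\rangle}G$, and by L\"uck--Meintrup there is an $n$-dimensional model, $n=\cd_{\langle K\rangle}G$. The equivariant Whitehead theorem then produces a $G$-map $E_{\langle K\rangle}G \to \ast^{n+1}(G/K)$; composing with $\rho\colon EG \to E_{\langle K\rangle}G$ and with the join of the orbit maps $G/K \to G/H$ (available because $K$ is subconjugate to $H$) gives $EG \to \ast^{n+1}(G/H)$, and Proposition~\ref{propgenus}(a) finishes. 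For (c) the paper simply cites the identification of $E(G/K)$ as a model for $E_{\langle K\rangle}G$ when $K\trianglelefteq G$ and reads off $\cd_{\langle K\rangle}G=\cd(G/K)$.

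Your skeletal-cover lemma is a clean, self-contained substitute for the Blowers/Whitehead package: it trades the structural input ``$E_{\langle K\rangle}G$ is an infinite join'' for an elementary equivariant version of $\cat\leq\dim$, and it yields the slightly more general statement that $\Agenus(Y)\leq \dim Y$ for any $G$-CW complex whose isotropy is subconjugate to $H$, not only classifying spaces. The paper's route, on the other hand, avoids having to make the equivariant open-star neighbourhoods precise and stays entirely within the join characterization of $\Agenus$ that the paper has already set up. Either path is a valid proof of the theorem; the only place you must still be careful is exactly where you say, namely constructing the $G$-invariant $W_k$ and the retraction onto a disjoint union of orbits.
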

 
 \begin{proof}
 	\begin{enumerate}[(a)]
 		\item 	By the universal property of the classifying space with respect to a full family of subgroups, for any such family $\mathcal{F}$ there exists a $G$-equivariant map (unique up to $G$-homotopy equivalence) $ \rho \colon E G \rightarrow E_{\mathcal{F}}(G). $ By virtue of monotonicity of $\Agenus$ (property (b) of Proposition \ref{propgenus}) and Theorem \ref{TCrasAgenus}, this yields $$ \secat(H \hookrightarrow G) = \Agenus(E G) \leq \Agenus(E_{\mathcal{F}}(G)). $$
 		
 		\item Let $K \leqslant G$ subconjugated to $H$ (i.e., such that there exists some $g \in G$ with $g^{-1}Kg \leqslant H$) and put $n := \cd_{\langle K \rangle}G$. As described by Blowers in \cite{Blowers}, the infinite join of the coset space $G/K$ is a model for the classifying space with respect to the family $\langle K \rangle$, i.e. $$ E_{\langle K \rangle}(G) \simeq \ast^{\infty}(G/K). $$ By the generalized Eilenberg-Ganea theorem for families \cite[Theorem 0.1]{LM00}, see also \cite[Theorem 5.2]{Luck}, there exists an $n$-dimensional model of the space $E_{\langle K \rangle}(G)$. Then, the equivariant Whitehead Theorem (see for example \cite[{Chapter 1, Theorem 3.2}]{May}) implies the existence of a $G$-equivariant map $E_{\langle K \rangle}G \rightarrow \ast^{n+1}(G/K)$. This, coupled with the existence of the comparison map $\rho \colon EG \rightarrow E_{\langle K \rangle}$, produces in turn a $G$-equivariant map $$ E G \xrightarrow{\rho} E_{\langle K \rangle}G \rightarrow \ast^{n+1}(G/K). $$ But as the subgroup $K$ is taken as subconjugated to $H$, the maps $G/K \rightarrow G/H$ induce $G$-equivariant maps at the level of joins. Thus, there exists a final $G$-equivariant map $$E G \rightarrow \ast^{n+1}(G/K) \rightarrow \ast^{n+1}(G/H). $$ The claim now follows from the characterization of $\Agenus$ in terms of the join, property (a) of Proposition \ref{propgenus}.
 		
 		\item Finally, under the hypothesis of statement (b), assume further that the subgroup is normal, $K \trianglelefteq G$. Then \cite[Proposition 4.21 and Corollary 4.22]{ArcinCisner17} informs us that $E(G/K)$ is in fact a model for $E_{\langle K \rangle}(G)$. Using (b) above we conclude the inequality $$ \secat(H \hookrightarrow G) \leq \cd_{\langle K \rangle}G = \cd(G/K).$$
 	\end{enumerate}
 	
 \end{proof}
 
 	For any $r \in \NN$ with $r \geq 2$ and by virtue of Theorem \ref{TCrasAgenus}, whenever we specialize Theorem \ref{Newbounds} to the case of the iterated diagonal inclusion $\Delta_{\pi,r} \hookrightarrow \pi^r$ we obtain, in an straightforward way the following new bounds for $\tc_r(\pi)$.
 	
 	\begin{corollary}\label{CorNewbounds}
 	Let $\pi$ be a torsion-free group, and $K \leqslant \pi^r$ subconjugated to $\Delta_{r,\pi}$. The following inequalities are satisfied:
 	\begin{enumerate}[(a)]
 		\item  $ \tc_r(\pi) \leq \Agenus(E_{\mathcal{F}}(\pi^r))$ for $\mathcal{F}$ any full family of subgroups of $\pi$.
 		\item $ \tc_r(\pi) \leq \cd_{\langle K \rangle} \pi^r. $
 		\item $ \tc_r(\pi) \leq \cd(\pi^r/K)$ if $K \trianglelefteq \pi^r$. 
 	\end{enumerate} 
 	\end{corollary}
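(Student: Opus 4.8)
The plan is to deduce the entire corollary from Theorem~\ref{Newbounds} by specializing it to the diagonal monomorphism $\Delta_{\pi,r} \hookrightarrow \pi^r$; no genuinely new argument is required, only a short verification that the hypotheses transfer and that $\tc_r(\pi)$ is the sectional category of that inclusion.

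First I would do the bookkeeping. If $\pi$ is torsion-free then so is $\pi^r$, and the diagonal $\Delta_{\pi,r} \cong \pi$ is a torsion-free subgroup of it; hence the pair $G := \pi^r$, $H := \Delta_{\pi,r}$ meets the standing hypotheses of Theorem~\ref{Newbounds}. Next I would identify $\tc_r(\pi)$ with $\secat(\Delta_{\pi,r} \hookrightarrow \pi^r)$. Taking $X = K(\pi,1)$, one has $X^r = K(\pi^r,1)$, and a model for its universal cover is $\widetilde{X}^{\,r} = (E\pi)^r \simeq E(\pi^r)$ as a $\pi^r$-space (a product of contractible spaces carrying a free cellular $\pi^r$-action). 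Then part (2) of Theorem~\ref{TCrasAgenus} together with Corollary~\ref{SecatGrpAsAgenus} gives
$$ \tc_r(\pi) = \Agenus\big(E(\pi^r)\big) = \secat(\Delta_{\pi,r} \hookrightarrow \pi^r), $$
for the one-element family $\Aa = \{ \pi^r / \Delta_{\pi,r} \}$ (the identification of the relevant coverings is also recorded in Remark~\ref{TCrcoverRemark}).

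With this identity in hand, I would simply feed $G = \pi^r$ and $H = \Delta_{\pi,r}$ into the three parts of Theorem~\ref{Newbounds}. Part (a), applied to an arbitrary full family $\mathcal{F}$ of subgroups of $\pi^r$, yields (a). For (b) and (c), any $K \leqslant \pi^r$ subconjugate to $\Delta_{\pi,r}$ is precisely the kind of subgroup those parts accept, so they give $\tc_r(\pi) \leq \cd_{\langle K \rangle}\pi^r$ and, when additionally $K \trianglelefteq \pi^r$, $\tc_r(\pi) \leq \cd(\pi^r/K)$.

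There is no real obstacle here; the substance lies entirely in Theorems~\ref{TCrasAgenus} and~\ref{Newbounds}. The only points that need care are essentially cosmetic: checking that the universal cover of $(K(\pi,1))^r$ is a model for $E(\pi^r)$ so that the family appearing in Theorem~\ref{TCrasAgenus} matches the one in Corollary~\ref{SecatGrpAsAgenus}; correctly reading the family $\mathcal{F}$ in (a) as a family of subgroups of $\pi^r$ (rather than of $\pi$); and carrying over to (b) the dimension restriction $\cd_{\langle K \rangle}\pi^r \geq 3$ that is present in Theorem~\ref{Newbounds}(b).
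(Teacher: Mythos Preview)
Your proposal is correct and follows exactly the paper's approach: the corollary is obtained by specializing Theorem~\ref{Newbounds} to $G=\pi^r$, $H=\Delta_{\pi,r}$ and invoking Theorem~\ref{TCrasAgenus} to identify $\tc_r(\pi)$ with $\secat(\Delta_{\pi,r}\hookrightarrow\pi^r)$. Your additional care in flagging that $\mathcal{F}$ should be a full family of subgroups of $\pi^r$ and that the hypothesis $\cd_{\langle K\rangle}\pi^r\geq 3$ from Theorem~\ref{Newbounds}(b) is implicitly carried over is well placed.
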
 
 	
 	\begin{remark}
 	Observe that, if in statement (b) of Corollary \ref{CorNewbounds} we take $H = \Delta_{r,\pi} \cong \pi$, we easily recover the upper bound from \cite[Corollary 3.2]{FOSequ}. Moreover, consider a central subgroup $H \leqslant \pi$. Its image through the iterated $r$-diagonal $\Delta_r(H) \leqslant \pi^r$ is a normal subgroup of $\pi^r$. Under these assumptions, statement (c) of Corollary \ref{CorNewbounds} recovers the well-known result from M. Grant, \cite[Proposition 3.7]{GrantFib} for $r = 2$, and gives a generalization for $r > 2$. 
 	\end{remark} \medskip
  
 An straightforward but interesting consequence of Theorem \ref{SecatAsAgenus}, Theorem \ref{TCrasAgenus} and property (d) of Proposition \ref{propgenus} is the following bound for sectional category and sequential topological complexities with respect to subgroups of a given group.

 \begin{corollary} \label{TCBgenus}
 	Let $\pi$ be a free torsion group with subgroups $H, K \leqslant \pi$, and $J \leqslant H$. The following inequality holds $$ \secat(J \hookrightarrow H) \leq (\secat(K \hookrightarrow \pi)+1)(\Bgenus(\pi/K)+1)-1 $$ for $\mathcal{B} = \{ H/J \}$.
 	
 	In particular, for the specific case of the iterated diagonal inclusions $\Delta_{H,r} \hookrightarrow H^r$ and $\Delta_{\pi,r} \hookrightarrow \pi^r$ the above inequality yields $$\tc_r(H) \leq (\tc_r(\pi) + 1)\left( \Bgenus \left(\quot{\pi^r}{\Delta_{\pi,r}} \right) + 1\right) -1  $$ where $\mathcal{B} = \left\{\quot{H^r}{\Delta_{H,r}} \right\}$ and $r \in \NN$ with $r \geq 2$. 
 \end{corollary}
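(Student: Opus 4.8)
The plan is to deduce the first inequality directly from Corollary~\ref{SecatGrpAsAgenus} together with property~(d) of Proposition~\ref{propgenus}, and then to obtain the statement about sequential topological complexities as the special case attached to iterated diagonal inclusions. First I would record the two instances of Corollary~\ref{SecatGrpAsAgenus} that are needed: $\secat(K\hookrightarrow\pi)=\Agenus(E\pi)$ for the family $\Aa=\{\pi/K\}$ of $\pi$-spaces, and $\secat(J\hookrightarrow H)=\Bgenus(EH)$ for the family $\mathcal B=\{H/J\}$ of $H$-spaces. The bridge between the two computations is the observation that $E\pi$, regarded as an $H$-space by restriction of the $\pi$-action, is a free contractible $H$-CW complex (each free $\pi$-cell splits into $[\pi:H]$ free $H$-cells), hence a model for $EH$; since $\Agenus$-type invariants are preserved under $G$-homotopy equivalence by the monotonicity property, Proposition~\ref{propgenus}(b) applied in both directions, we get $\Bgenus(E\pi)=\Bgenus(EH)=\secat(J\hookrightarrow H)$.

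With this identification in hand, the first inequality falls out of Proposition~\ref{propgenus}(d) applied with ambient group $\pi$, subgroup $H$, the $G$-space $X=E\pi$, and the families $\Aa=\{\pi/K\}$ and $\mathcal B=\{H/J\}$. It reads
$$\Bgenus(E\pi)\le(\Agenus(E\pi)+1)\bigl(\Bgenus(\pi/K)+1\bigr)-1,$$
where on the right $\pi/K$ is viewed as an $H$-space by restriction and $\max\{\Bgenus(A):A\in\Aa\}=\Bgenus(\pi/K)$ because $\Aa$ is a singleton; substituting the two genus computations above yields exactly $\secat(J\hookrightarrow H)\le(\secat(K\hookrightarrow\pi)+1)(\Bgenus(\pi/K)+1)-1$.

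For the second assertion I would simply apply the first part to the torsion-free groups $H^r\leqslant\pi^r$ with the subgroups $\Delta_{\pi,r}\leqslant\pi^r$ and $\Delta_{H,r}\leqslant H^r$ and the family $\mathcal B=\{H^r/\Delta_{H,r}\}$, obtaining
$$\secat(\Delta_{H,r}\hookrightarrow H^r)\le(\secat(\Delta_{\pi,r}\hookrightarrow\pi^r)+1)\bigl(\Bgenus(\pi^r/\Delta_{\pi,r})+1\bigr)-1.$$
It then remains to identify $\secat(\Delta_{\pi,r}\hookrightarrow\pi^r)=\tc_r(\pi)$ and $\secat(\Delta_{H,r}\hookrightarrow H^r)=\tc_r(H)$, which follows from Theorem~\ref{TCrasAgenus} (both sides equal the $\Agenus$ of $\widetilde X^{\,r}\simeq E\pi^r$ for $X=K(\pi,1)$ with respect to $\{\pi^r/\Delta_{\pi,r}\}$), or equivalently from the identification of $\tc_r$ of an aspherical space with the sectional category of the covering associated to the diagonal recalled in Remark~\ref{TCrcoverRemark}. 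I do not expect a genuine obstacle here: the only point deserving a word of care is the identification $\Bgenus(E\pi)=\secat(J\hookrightarrow H)$, i.e.\ the fact that an arbitrary free contractible $H$-CW complex may replace the canonical model of $EH$ when computing the $\mathcal B$-genus, which is exactly where the $G$-homotopy invariance of $\Agenus$ is used.
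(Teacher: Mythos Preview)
Your proposal is correct and follows essentially the same route as the paper: both proofs identify $\secat(K\hookrightarrow\pi)=\Agenus(E\pi)$ and $\secat(J\hookrightarrow H)=\Bgenus(E\pi)$ via the characterization of sectional category of subgroup inclusions as $\Agenus$, then invoke Proposition~\ref{propgenus}(d), and handle the $\tc_r$ statement by the same reasoning applied to the diagonal inclusions together with Theorem~\ref{TCrasAgenus}. You are in fact slightly more careful than the paper in making explicit why $\Bgenus(E\pi)=\secat(J\hookrightarrow H)$, namely that $E\pi$ with the restricted $H$-action is a model for $EH$; the paper's proof asserts this identification directly.
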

 \begin{proof}
 	For the first part of the statement, consider $\Aa = {\pi/K}$ and $\mathcal{B} = \{ H/J \}$. By Theorem \ref{SecatAsAgenus} we have the identifications $$ \secat(K \hookrightarrow \pi) = \Agenus(E\pi), \qquad \secat(J \hookrightarrow H) = \Bgenus(E\pi)$$ which, combined with Proposition \ref{propgenus} (d), shows the claim.
 	
 	For the second part, if we define $$\Aa = \left\{ \quot{\pi^r}{\Delta_{\pi,r}} \right\} \qquad \mbox{ and } \qquad \mathcal{B} = \left\{ \quot{H^r}{\Delta_{H,r}} \right\},$$ then Theorem \ref{TCrasAgenus} (2) gives the characterizations $$ \tc_r(\pi) = \Agenus(E\pi), \qquad \tc_r(H) = \Bgenus(E\pi) $$ hence the result follows once again from applying Proposition \ref{propgenus} (d).
 \end{proof}
 
 Notice that the most significant aspect of this bound is that it gives us an explicit and computable measure of how much the sequential topological complexities of groups fail to be monotone under subgroup inclusions. In particular, one could exploit such measure to explore new examples of lower bounds with respect to subgroups. Here we present the case of the semidirect product of groups.

 \begin{corollary}\label{CorGenSemid}
 	Let $H$ and $K$ be torsion free groups. Then we have $\tc_r(H \rtimes K) \geq \tc_r(K)$ 
 \end{corollary}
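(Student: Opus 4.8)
The plan is to deduce the inequality from Corollary \ref{TCBgenus} by realizing $K$ as a retract of $G := H \rtimes K$. Projection onto the second factor is a group epimorphism $\rho \colon G \twoheadrightarrow K$ which is a left inverse of the canonical inclusion $\iota \colon K \hookrightarrow G$, so that $\rho \circ \iota = \id_K$. First I would observe that $G$ is torsion free: an element of finite order in $H \rtimes K$ must have trivial image under $\rho$ (since $K$ is torsion free), hence lies in $H$, and is therefore trivial; thus $H$, $K$ and $G$ all admit Eilenberg--MacLane spaces. Applying Corollary \ref{TCBgenus} to the subgroup inclusion $\iota \colon K \hookrightarrow G$ (with the roles of $\pi$ and $H$ there played by $G$ and $K$) then yields
$$ \tc_r(K) \;\le\; \bigl(\tc_r(G) + 1\bigr)\Bigl(\Bgenus\bigl(\quot{G^r}{\Delta_{G,r}}\bigr) + 1\Bigr) - 1, \qquad \mathcal{B} = \left\{ \quot{K^r}{\Delta_{K,r}} \right\}, $$
where $\quot{G^r}{\Delta_{G,r}}$ is regarded as a $K^r$-space through the restriction of the $G^r$-action along $\iota^r \colon K^r \hookrightarrow G^r$. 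Everything therefore reduces to showing that the correction factor is trivial, i.e. $\Bgenus\bigl(\quot{G^r}{\Delta_{G,r}}\bigr) = 0$.

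To establish this I would use the retraction once more. Writing $\rho^r \colon G^r \to K^r$ for the $r$-fold product of $\rho$, the assignment
$$ \quot{G^r}{\Delta_{G,r}} \longrightarrow \quot{K^r}{\Delta_{K,r}}, \qquad x\,\Delta_{G,r} \longmapsto \rho^r(x)\,\Delta_{K,r}, $$
is well defined because $\rho^r$ maps $\Delta_{G,r}$ into $\Delta_{K,r}$, and it is $K^r$-equivariant with respect to the $\iota^r$-restricted action on the source, since $\rho^r \circ \iota^r = \id_{K^r}$. By monotonicity of $\Bgenus$ (Proposition \ref{propgenus}(b)) together with its normalization property (Proposition \ref{propgenus}(c), noting $\quot{K^r}{\Delta_{K,r}} \in \mathcal{B}$) this forces
$$ \Bgenus\bigl(\quot{G^r}{\Delta_{G,r}}\bigr) \;\le\; \Bgenus\bigl(\quot{K^r}{\Delta_{K,r}}\bigr) \;=\; 0. $$
Substituting this back into the previous inequality collapses it to $\tc_r(K) \le \tc_r(G) = \tc_r(H \rtimes K)$, which is the assertion.

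I do not anticipate a genuine obstacle here; the only points requiring some care are verifying the hypotheses of Corollary \ref{TCBgenus} (above all the torsion-freeness of $H \rtimes K$) and keeping track of which group acts on $\quot{G^r}{\Delta_{G,r}}$ when the $\mathcal{B}$-genus is taken, that is, remembering the restriction along $\iota^r$. It is worth remarking that the argument works verbatim whenever $K$ is a retract of $G$ in the category of torsion-free groups, the semidirect product merely being the prototypical instance; and that a more hands-on proof, bypassing the $\Agenus$ formalism, is available by noting that $B\iota$ and $B\rho$ exhibit $K(K,1)$ as a homotopy retract of $K(G,1)$ and that $\tc_r$ is monotone under homotopy retracts --- one pulls an optimal sectional-category cover of $\Delta_{G,r}$ back along $(B\iota)^r$ and post-composes the partial lifts with $B\rho$.
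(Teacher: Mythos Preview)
Your argument is correct and follows essentially the same route as the paper: both apply Corollary~\ref{TCBgenus} to the inclusion $K \hookrightarrow G = H \rtimes K$ and then use the projection $G \twoheadrightarrow K$ to produce a $K^r$-equivariant map $G^r/\Delta_{G,r} \to K^r/\Delta_{K,r}$, whence monotonicity and normalization of $\Bgenus$ kill the correction factor. Your write-up is in fact somewhat more careful than the paper's (you verify torsion-freeness of $G$ and spell out the well-definedness and equivariance of the induced map), and your closing remarks on group retracts and the direct homotopy-retract argument are apt.
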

 \begin{proof}
 	Put $G := H \rtimes K$. As the semidirect product $G$ is given by a group extension of the form $$ \{1\} \rightarrow H \rightarrow G = H \rtimes K \xrightarrow{p} K \rightarrow \{1\}  $$ and $K \leqslant G$, we have an induced $K$-equivariant map between the coset spaces $$ \quot{G^r}{\Delta_{G,r}} \xrightarrow{(p \times \cdots \times p)} \quot{K^r}{\Delta_{K,r}}. $$ Corollary \ref{TCBgenus} gives us the following inequality $$ \tc_r(K) \leq (\tc_r(G) + 1)\left( \Bgenus\left(\quot{G^k}{\Delta_{G,r}}\right) + 1\right) -1  $$ for the family $\mathcal{B} = \left\{ \quot{K^r}{\Delta_{K,r}}\right\}$. But statements (b) and (c) of Proposition \ref{propgenus} indicate that $$\Bgenus\left(\quot{G^r}{\Delta_{G,r}}\right) \leq \Bgenus\left({\quot{K^r}{\Delta_{K,r}}}\right) = 0 $$ so the claim follows.
 \end{proof}
 
 Compare our result with the alternative lower bound by the cohomological dimension given by M. Grant, G. Lupton and J. Oprea in \cite[Corollary 1.3]{GLO15} for $r = 2$. \bigskip
 
 For any subgroup $H \leqslant \pi \times \pi$ containing the diagonal subgroup $\Delta_{\pi}$ consider the family $\mathcal{B} = \left\{ \quot{(\pi \times \pi)}{H} \right\}$. Given that the natural projection $$\quot{(\pi \times \pi)}{\Delta_{\pi}} \rightarrow \quot{(\pi \times \pi)}{H}$$ is a $(\pi \times \pi)$-equivariant map, we see that $\Bgenus \left( \quot{(\pi \times \pi)}{\Delta_{\pi}}\right) = 0$. Then, by Proposition \ref{propgenus}(d) we get the inequality: $$ \Bgenus(E(\pi\times \pi)) \leq \Agenus(E(\pi \times \pi)) = \tc(\pi). $$ 
 
 Now, observe that there is a correspondence between subgroups of $\pi \times \pi$ containing the diagonal subgroup $\Delta_{\pi}$ and normal subgroups of $\pi$. To see this, first assume $\Delta_{\pi} \leqslant H \leqslant \pi \times \pi$, and define the subgroup of $\pi$ determined by $ K := \{ x \mid (1,x) \in H \}. $ This is clearly a normal subgroup, since $$(g,g)(1,x)(g^{-1},g^{-1}) = (1,gxg^{-1}) \qquad \forall (g,g) \in \pi \times \pi  $$ and the left hand side of the equality is in $H$, since $H$ is an overgroup of the diagonal subgroup $\Delta_{\pi}$. Now, if $K \trianglelefteq \pi$, we can easily define its associated overgroup of $\Delta_{\pi}$ by $$ H_K := \{ (g,h) \mid g^{-1}h \in K \}. $$ From here, notice that, since $$\Bgenus(E(\pi \times \pi)) = \secat(H \hookrightarrow \pi \times \pi),$$ we get a new series of bounds by Corollary \ref{SecatGrpAsAgenus} and repeated application of Proposition \ref{propgenus} (d) through a descending sequence of inequalities of the form $$ 0  \leq \cdots \leq \secat(H_{K_{i+1}} \hookrightarrow \pi \times \pi) \leq \secat(H_{K_{i}}\hookrightarrow \pi \times \pi) \leq \cdots \leq \tc(\pi \times \pi)  $$ where, for each index $i \geq 0$, the group $H_{K_{i}}$ is the overgroup of $\Delta_{\pi}$ associated to the normal subgroup $K_i \trianglelefteq \pi \times \pi$ for an ascending chain of normal subgroups of $\pi \times \pi$ of the form $$ \Delta_{\pi} = K_0 \leqslant K_1 \leqslant \cdots \leqslant K_i \leqslant K_{i+1} \leqslant \cdots \leqslant \pi \times \pi.$$ 
 
 \medskip

 \section{Thoughts on proper genus and proper topological complexity of groups}
 
 We will finish this paper with some very quick reflections on $\Agenus$ and topological complexity for proper actions of groups, that might be of interest for future work on the matter.  Let $G$ an arbitrary discrete group, this time possibly with torsion. Define the family of finite subgroups of $G$, denoted $$\mathcal{F}in := \{ H \leqslant G \mid |H| < \infty \}. $$ The classifying space with respect to such family is known as the \textit{classifying space for proper actions} \index{Classifying space!of a family!proper actions} of $G$, and it is usually denoted by $\underline{E}G$. Its orbit space with respect to the natural $G$-action  $\underline{E}G /G$ is denoted by $\underline{B}G$. This space receives the name of \textit{classifying space for proper $G$-bundles}, as it classifies proper $G$-bundles in an analogous way in which $BG$ classifies principal $G$-bundles (as shown in \cite{BCH94}). 
 
 Clearly, $\text{dim}_G(\underline{E}G) = 0$ if and only if $G$ is finite. In case $G$ is torsion free, it is also evident that $\underline{E}G \simeq EG$. So the truly interesting case for us is whenever $G$ is a non-finite group with torsion. As we know, for such groups the topological complexity is, indeed, infinite.  However, there is a powerful realization result in the spirit of Kan-Thurston involving the classifying space for proper actions, due to I. Leary and B. Nucinkis.
 
 \begin{proposition}[Theorem 1' of \cite{LeNu01}]\label{KTproper}
 	For any $CW$-complex $X$ there exists a group $G_X$ such that $\underline{B}G_X$ is homotopy equivalent to $X$. The group $G_X$ has a torsion-free subgroup $K_X$ of index two. If $X$ is finite, there is a finite model for $\underline{B}K_X$.
 \end{proposition}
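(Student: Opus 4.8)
This is the Leary--Nucinkis realization theorem, which here is cited rather than reproved; I only indicate how one might prove it. The plan is to reduce to the case of a connected simplicial complex $X$ — finite whenever the prescribed homotopy type is that of a finite complex, using CW and simplicial approximation — and then to manufacture a discrete group $G_X$ together with a contractible $G_X$-CW-complex $Y_X$ on which $G_X$ acts properly, built so carefully that (i) $Y_X$ is actually a model for $\underline{E}G_X$, meaning $Y_X^F$ is contractible for every finite $F\leqslant G_X$, and (ii) the orbit space $Y_X/G_X$ is homotopy equivalent to $X$. Granting such a construction, $\underline{B}G_X=\underline{E}G_X/G_X=Y_X/G_X\simeq X$, which is the first assertion.

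To build $G_X$ one would work simplex by simplex over the barycentric subdivision $X'$, in the spirit of the Davis reflection-group / complex-of-groups constructions, but with the ``mirror structure'' chosen so that the fundamental chamber of the development is homotopy equivalent to $X$ rather than to the contractible, cone-like chamber that the naive Davis recipe produces: over each cell of $X'$ one places an explicit finite group — a product of copies of $\mathbb{Z}/2$ will do — acting on an explicit contractible local model whose fixed-point subcomplexes are again contractible, and one glues these local pictures along faces according to a complex-of-groups datum supported on $X'$; alternatively one first invokes the strict Kan--Thurston theorem to pin down the homotopy type and then ``folds'' it along a $\mathbb{Z}/2$-symmetry. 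Either way, the group $G_X$ produced carries a distinguished epimorphism $\varepsilon\colon G_X\to\mathbb{Z}/2$ recording the parity of reflection length; put $K_X:=\ker\varepsilon$, so $[G_X:K_X]=2$. Arrange the local blocks so that every torsion element of $G_X$ is conjugate to one of the order-two reflection generators, hence has $\varepsilon$-image the non-trivial element; then $K_X$ is torsion-free, and since any finite $F\leqslant G_X$ satisfies $F\cap K_X=1$ and therefore embeds in $G_X/K_X=\mathbb{Z}/2$, every non-trivial finite subgroup of $G_X$ has order exactly two. Consequently $K_X$ acts freely on the contractible complex $Y_X$, so $Y_X/K_X$ is a $K(K_X,1)$, i.e. a model for $\underline{B}K_X$; and when $X$ is finite the complex-of-groups datum is finite, $G_X$ acts on $Y_X$ with finitely many orbits of cells, and $Y_X/K_X$ has finitely many cells, giving a finite model for $\underline{B}K_X$.

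The verification then splits into three points: $Y_X$ is contractible; for every order-two subgroup $\langle\tau\rangle$ the wall $Y_X^{\tau}$ is non-empty and contractible; and the orbit projection identifies $Y_X/G_X$, up to homotopy, with $X$. The first two are precisely what the local models are engineered to deliver — each reflection fixes a wall which is itself built by the same recipe and is therefore contractible, and global contractibility is obtained by the standard filtration / ``building up over the chamber system'' Mayer--Vietoris argument familiar from Davis-type theory — while the third follows from the choice of mirror structure making the chamber $\simeq X$. Properness and the bound on stabilizer orders are immediate from the construction. Putting these together yields $\underline{B}G_X=Y_X/G_X\simeq X$ with the stated properties of $K_X$.

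\textbf{The main obstacle} is the simultaneous control of three features that work against one another: contractibility of the ambient complex $Y_X$; contractibility of \emph{all} the fixed-point subcomplexes $Y_X^F$, without which $Y_X$ fails to be $\underline{E}G_X$ and the whole conclusion collapses; and the homotopy type of the quotient $Y_X/G_X$ being forced to equal $X$ rather than degenerating to a contractible chamber as in the classical Davis or Kan--Thurston outputs. Designing the mirror structure / complex-of-groups datum that threads this needle — while keeping every torsion element of order two and outside the index-two subgroup $K_X$, and keeping everything finite when $X$ is finite — is the technical heart of the argument; granted that, the remaining steps are bookkeeping together with standard contractibility criteria for chamber systems.
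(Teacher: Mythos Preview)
The paper does not prove this proposition at all: it is quoted verbatim as Theorem~1' of Leary--Nucinkis and used as a black box, so there is no ``paper's own proof'' to compare against. You correctly identify this at the outset, and your sketch is a faithful high-level outline of the actual Leary--Nucinkis construction (a Davis/right-angled-Coxeter style complex-of-groups over the barycentric subdivision, with $\mathbb{Z}/2$ local groups, the parity homomorphism singling out the torsion-free index-two kernel, and the fixed-set contractibility checks); nothing more is required here.
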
 
 
 Consequentially, every connected $CW$-complex has the homotopy type of a $\underline{B}G$. Moreover, as it is shown in the proof of Proposition \ref{KTproper}, the group is not torsion free except in the case that $X$ is one-dimensional. As such, the previous result provides us with a wide range of cases of torsion groups with associated spaces that may carry some non-infinite topological complexity. This result motivates our next definition.
 
 \begin{definition}
 	Define the $G$-\textit{proper topological complexity} \index{Topological Complexity!$G$-proper} of $G$, denoted by $\underline{\tc}(G)$, as $$\underline{\tc}(G) = \tc(\underline{B}G).$$
 \end{definition} 
 
In particular, if the group $G$ is locally finite, it is known that $\underline{B}G$ is contractible. As such, $\underline{\tc}(G) = 0$. Finding the complete characterization of groups for which $\underline{\tc}(G) = 0$ will be matter of future investigation. 

\begin{note}
It is important to distinguish this notion from the previously defined concept of \emph{proper topological complexity} (see \cite{GCalMur24}). That variant deals with different kind of information, and it is defined as an invariant of the proper homotopy type of the configuration space.
\end{note}

The main interest of $\underline{\tc}(G)$ lies in the fact that it allows us to recover the usual notion of topological complexity of the group $G$ whenever $G$ is torsion free, but it also potentially provides more information in good enough cases when $G$ has torsion but it has also good enough properties with respect to its proper actions. Namely, when $\underline{B}G$ can be realized by a finitely dimensional $CW$-complex model, by means of the procedure outlined in the proof of Proposition \ref{KTproper}).
 
 \begin{example}
 	Very often the problem lies within finding concrete examples. One of the possible approaches is through the usage of the nullification functor. Let $A$ and $X$ be topological spaces, we say that $X$ is $A$-null if the mapping space $\textrm{Map}(A,X)$ is homotopically equivalent to $X$ via $X \rightarrow \textrm{Map}(A,X)$, the inclusion of constant maps. We define the $A$-nullification of $X$ as an endofunctor $\mathbb{P}_A : \textrm{\textbf{Spaces}} \rightarrow \textrm{\textbf{Spaces}}$ that takes every space $X$ to a corresponding $A$-null space $\mathbb{P}_A(X)$ such that there exists a universal map $X \rightarrow \mathbb{P}_A(X)$ which induces a weak homotopy equivalence $\textrm{Map}(\mathbb{P}_A(X),Y) \simeq \textrm{Map}(X,Y)$ for every $A$-null space $Y$. It is possible to see that any other $A$-null space $U$ satisfying that property is homotopically equivalent to $\mathbb{P}_A(X)$. It is not within our objectives to develop further the machinery required or its properties, we refer the interested reader to \cite{DFarj96} for a thorough treatment of these tools. Denote now by $\mathcal{P} = \{2, 3, \cdots, p_i, \cdots \}$ the set of all prime numbers, and define the spaces $$W_n = \bigvee_{i=1}^{n} B \ZZ_{p_i} \qquad \mbox{and} \qquad  W_{\infty} = \bigvee_{p \in \mathcal{P}} B \ZZ_{p}.$$ The interesting point is that for any $G$ such that there exists a finite dimensional model for $\underline{B}G$, there is a homotopy equivalence $\underline{B}G \simeq \mathbb{P}_{W_{\infty}}(BG)$, see \cite[Theorem 3.2]{RF05}. Colourful examples through this technique can be found for some wallpaper groups (see \cite{Schatt78} for precise definitions), thanks to the computations carried out by R. Flores in \cite{RF05}, like:
 	
 	\begin{itemize}
	\item \textit{the discope group} $G = \mbox{pmm}$: $\underline{B}pmm \simeq \ast$ and thus $\underline{\tc}(pmm) = 0$.
 	\item  \textit{the tritrope group} $G = \mbox{p3}$: $\underline{B}\mbox{p3} \simeq S^2$, and thus $\underline{\tc}(\mbox{p3}) = 2$.
 	\end{itemize}
 	
 \end{example}
 \medskip
 
 \subsection{Proper genus of a group, and comparison with $\underline{\tc}(G)$.} 
 
 Given the family of subgroups $\mathcal{F}in$, denote its associated orbit category (and, by a slight abuse of notation, the collection of objects of such category as well) as $\mbox{Or}( \mathcal{F}in)$. Naturally, we can always consider the corresponding $\Agenus$ of an arbitrary discrete group $G$ with respect to the family of its finite subgroups, which motivates the following definition:

 \begin{definition}
 	Let $G$ an arbitrary group, and $\mathcal{F}in$ the closed family of finite subgroups. Define the \emph{proper genus} \index{$\Agenus$!proper genus} of the group $G$ by $$\underline{\genus}(G) := \mbox{Or}(\mathcal{F}in) \mbox{-}\genus(\underline{E}G).$$
 \end{definition}
 
 In view of the characterization from Theorem \ref{TCrasAgenus}, it is natural to ask whether the proper versions of topological complexity and genus coincide or not. Before adressing that matter, we will show a dimensional lower bound for the proper genus of a group. 
 
 \begin{proposition} \label{ProperGenusDim}
 	Let $G$ be a discrete group such that there is a finite dimensional model for $\underline{B}G$ satisfying $H^n(\underline{B}G;A) \neq 0$ for some $n \in \NN$ and some coefficient system $A$. Then we have $\underline{\mbox{genus}}(G) \geq n$. 
 \end{proposition}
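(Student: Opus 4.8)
The plan is to combine the join description of $\Agenus$ in Proposition~\ref{propgenus}(a) with the rigidity of $\underline{E}G$ coming from its universal property. Concretely, I read $H^n(\underline{B}G;A)$ as the $n$-th Bredon cohomology $H^n_G(\underline{E}G;A)$ of $\underline{E}G$ with coefficients in the coefficient system $A$, and I will show that $\underline{\genus}(G)=k$ forces $\underline{E}G$ to be, up to $G$-homotopy, a retract of a $G$-CW complex of dimension $k$ with all isotropy finite, whence $H^m_G(\underline{E}G;A)=0$ for $m>k$; since $H^n(\underline{B}G;A)\neq 0$ this gives $n\leq k$. If $\underline{\genus}(G)=\infty$ there is nothing to prove, so assume it equals a finite $k$.

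First I would apply Proposition~\ref{propgenus}(a) to the family $\Aa=\mathrm{Or}(\mathcal{F}in)$, obtaining finite subgroups $K_0,\dots,K_k\leqslant G$ and a $G$-equivariant map $f\colon\underline{E}G\to Y$, where $Y:=\quot{G}{K_0}\ast\cdots\ast\quot{G}{K_k}$. I would then note that $Y$ is a $G$-CW complex, being the join of the finitely many $0$-dimensional $G$-CW complexes $G/K_i$, that $\dim Y=k$, and that each isotropy group of $Y$ has the form $\bigcap_{i\in S}g_iK_ig_i^{-1}$ for some nonempty $S\subseteq\{0,\dots,k\}$ and is therefore finite; in particular all isotropy groups of $Y$ lie in $\mathcal{F}in$. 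Hence the universal property of $\underline{E}G=E_{\mathcal{F}in}G$ provides a $G$-map $g\colon Y\to\underline{E}G$, and the uniqueness clause of that property, applied to $\underline{E}G$ itself, shows that any $G$-self-map of $\underline{E}G$ is $G$-homotopic to $\id_{\underline{E}G}$; in particular $g\circ f\simeq_G\id_{\underline{E}G}$. Thus $\underline{E}G$ is $G$-dominated by $Y$.

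To conclude, I would invoke $G$-homotopy invariance of Bredon cohomology: $H^m_G(\underline{E}G;A)$ is a direct summand of $H^m_G(Y;A)$ for every $m$, and $H^m_G(Y;A)=0$ for $m>\dim Y=k$ since the Bredon cochain complex of a $G$-CW complex is concentrated in degrees $0,\dots,\dim$. Therefore $H^m_G(\underline{E}G;A)=0$ for all $m>k$, and $H^n(\underline{B}G;A)=H^n_G(\underline{E}G;A)\neq 0$ forces $n\leq k=\underline{\genus}(G)$, as desired. (The same argument yields the slightly sharper statement $\underline{\genus}(G)\geq\cd_{\mathcal{F}in}G$, and it goes through verbatim if $A$ is instead interpreted as an ordinary, possibly twisted, coefficient system on the orbit space $\underline{B}G$, after passing to orbit spaces.) The only step demanding more than a line of justification is checking that the iterated join $Y$ really is a $G$-CW complex with $\dim Y=k$ and with all isotropy in $\mathcal{F}in$, so that the universal property applies; beyond this piece of bookkeeping I do not expect a genuine obstacle, since everything else is formal.
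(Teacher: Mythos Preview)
Your argument is correct and follows essentially the same route as the paper: use Proposition~\ref{propgenus}(a) to produce a $G$-map from $\underline{E}G$ into a $(k+1)$-fold join of orbits $G/K_i$, observe that this join is a $k$-dimensional $G$-CW complex with all isotropy in $\mathcal{F}in$, invoke the universal property of $\underline{E}G$ to obtain a $G$-retraction up to homotopy, and conclude by a dimension-versus-cohomology argument. The only cosmetic difference is that the paper immediately passes to orbit spaces and works with ordinary cohomology of $\underline{B}G$, whereas you phrase the final step in Bredon cohomology of $\underline{E}G$; you already note this alternative in your parenthetical remark, so the two proofs are interchangeable.
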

 \begin{proof}
 	Let $k:= \underline{\genus}(G)$. By Proposition \ref{propgenus} (a) there exists a $G$-equivariant map $$ f \colon \underline{E}G \rightarrow \displaystyle{\ast^{k+1}_{i=0} (G/F_i)}$$ where for each $i \in \{ 0, 1, \cdots, k+1 \}$ the group $F_i$ belongs to the family $\mathcal{F}in$. The join space $\displaystyle{\ast^{k+1}_{i=0} (G/F_i)}$ is naturally a proper $G$-CW complex, where the isotropy subgroup of each point corresponds with the intersection group $$(t_0, a_0, t_1, a_1, \cdots, t_k, a_k) \in \displaystyle{\ast^{k+1}_{i=0} (G/F_i)} \qquad G_{(t_0, a_0, t_1, a_1, \cdots, t_k, a_k)} = \bigcap_{a_i} G_{a_i}. $$ By the universal property of $\underline{E}G$, there exists a $G$-equivariant map $$g \colon \displaystyle{\ast^{k+1}_{i=0} (G/F_i)} \rightarrow \underline{E}G$$ satisfying that $ g \circ f$ is  $G$-homotopically equivalent to the identity. Passing to the quotient, this map yields a homotopically commutative diagram of $CW$-complexes of the form $$\begin{tikzcd}
 		\underline{B}G \arrow[r, "\overline{f}"] \arrow[rd] & \quot{[\displaystyle{\ast_{i = 0}^{k+1}}(G/F_i)]}{G} \arrow[d, "\overline{g}"] \\
 		& \underline{B}G.
 	\end{tikzcd}$$ Due to the fact that each of the coset spaces $G/F_i$ is discrete for every $F_i \in \mathcal{F}in$, we observe that $$\dim \left( \quot{[\displaystyle{\ast_{i = 0}^{k+1}}(G/F_i)]}{G} \right) \leq \dim \left(\displaystyle{\ast_{i = 0}^{k+1}}(G/F_i)\right) = k $$ where $\dim$ just denotes the usual topological (or covering) dimension of the spaces. However, the induced composite map in cohomology $$ H^n(\underline{B}G;A) \xrightarrow{\overline{g}^*} H^n \left( \quot{[\displaystyle{\ast_{i = 0}^{k+1}}(G/F_i)]}{G};A \right) \xrightarrow{\overline{f}^*} H^n(\underline{B}G;A)  $$ is obviously an isomorphism, hence by dimensional reasons it must hold that $k \geq n$, which shows the claim.
 \end{proof}
 
We conclude with a couple of examples of this lower bound. 
 \begin{example}
 \begin{enumerate}[(a)]	
 \item Suppose a group $G$ such that its classifying space for proper $G$-bundles $\underline{B}G$ is a finite dimensional $CW$-complex with the homotopy type of a $n$-dimensional sphere $S^n$. By Proposition \ref{ProperGenusDim}, we have that $\underline{\genus}(G) \geq n$.  However, $$\underline{\tc}(G) = \tc(\underline{B}G) = \tc(S^n) = \begin{cases}
 1 & n \mbox{ odd}\\
 2 & n \mbox{ even}
 \end{cases} $$ Through this example, we can conclude that the difference between $\underline{\genus}(G)$ and $\underline{\tc}(G)$ may be arbitrarily large. 
 \item More generally, we can consider the case of virtual Poincar\'e duality groups. Recall that given a group $G$ we say that $G$ virtually satisfies a property if there exists a subgroup $H \leqslant G$ of finite index with such property. R. Kim proved a specialization of Proposition \ref{KTproper}, see \cite[Theorem 1, Corollary 2]{Kim13}, stating that for any finite connected simplicial complex $X$, there exists a virtually torsion-free group $G$ with $\underline{E}G$ a cocompact manifold such that $\underline{B}G$ is homotopy equivalent to $X$. Furthermore, he also shown that for any finite connected simplicial complex $X$, there is a virtual Poincar\'e duality group $G$ such that $\underline{B}G$ is homotopy equivalent to $X$. 
 				
 Thus, for a finite simplicial complex $X$ with $H^n(X;A) \neq 0$ for some $n > 1$ and some coefficient system $A$, let $H$ be a torsion free Poincar\'e duality subgroup of a group $G$, such that $|G:H| \leq \infty$, satisfying that $\underline{E}G$ is a cocompact manifold and $\underline{B}G \simeq X$, as in \cite[Theorem 1, Corollary 2]{Kim13}. By the cocompacity of $\underline{E}G$ we know that $\underline{B}G$ has a finite dimensional model, thus by Proposition \ref{ProperGenusDim}, we have $\underline{\genus}(G) \geq n$. 
 \end{enumerate}
 \end{example}

\bibliography{SecatasAgenus.bib}{}
\bibliographystyle{plain}

\end{document}